\newenvironment{proof}{{\noindent \it Proof.}}{\hfill $\blacksquare$\par}
\newtheorem{theorem}{Theorem}[section]
\newtheorem{lemma}[theorem]{Lemma}
\newtheorem{corollary}[theorem]{\rm\bfseries Corollary}
\newtheorem{problem}{Problem}[section]
\newtheorem{fact}[theorem]{Fact}
\begin{document}

\vspace*{10mm}

\noindent
{\Large \bf Sharp bounds on the symmetric division deg index of graphs and line graphs}

\vspace{7mm}

\noindent
{\large \bf Hechao Liu$^{1,*}$, Yufei Huang$^2$}

\vspace{7mm}

\noindent
$^1$ School of Mathematical Sciences, South China Normal University, Guangzhou, 510631, P. R. China,
e-mail: {\tt hechaoliu@m.scnu.edu.cn} \\[2mm]
$^2$ Department of Mathematics Teaching, Guangzhou Civil Aviation College, Guangzhou, 510403, P. R. China, e-mail: {\tt fayger@qq.com}
\\[2mm]
$^*$ Corresponding author

\vspace{7mm}

\noindent
{\bf Abstract} \
For a graph $G$ with vertex set $V_{G}$ and edge set $E_{G}$, the symmetric division deg index is defined as $SDD(G)=\sum\limits_{uv\in E_{G}}(\frac{d_{u}}{d_{v}}+\frac{d_{v}}{d_{u}})$, where $d_{u}$ denotes the degree of vertex $u$ in $G$.
In 2018, Furtula et al. confirmed the quality of SDD index exceeds that of some more popular VDB indices, in particular that of the GA index. They shown a close connection between the SDD index and the earlier well-established GA index. Thus it is meaningful and important to consider the chemical and mathematical properties of the SDD index.
In this paper, we determine some sharp bounds on the symmetric division deg index of graphs and line graphs and characterize the corresponding extremal graphs.
\vspace{5mm}

\noindent
{\bf Keywords} \ line graph, symmetric division deg index, bound.

\noindent
\textbf{Mathematics Subject Classification:} 05C07, 05C09, 05C92

\baselineskip=0.30in

\section{Introduction}

We use $|U|$ to denote the cardinality of set $U$.
Let $G=(V_{G},E_{G})$ be a graph with vertex set $V_{G}$ and edge set $E_{G}$.
Let $n_{G}:=|V_{G}|$ and $m_{G}:=|E_{G}|$ be the order and size of $G$, respectively.
Denote by $N_{G}(u)$ the neighbors of vertex $u$, $d_{G}(u):=|N_{G}(u)|$ the degree of vertex $u$ in $G$.
We use $\Delta_{G}$ and $\delta_{G}$ to denote the maximum degree and minimum degree in $G$, respectively.
We call $G$ is a $\delta$-regular graph if $d_{u}=\delta$ for any $u\in V_{G}$.
A $(\Delta,\delta)$-biregular graph is the bipartite graph with $d_{u}=\Delta$, $d_{v}=\delta$ for any $uv\in E_{G}$.
For convenience, we sometimes write $d_{G}(u)$ as $d_{u}$ without causing confusion.
If $E_{G}\neq \emptyset$, we call $G$ is a nontrivial graph, we only consider connected nontrivial graphs in this paper.
Denote by $C_{n}$, $K_{n}$, $S_{n}$ and $P_{n}$, the cycle, complete graph, star graph and path with order $n$, respectively.
In this paper, all notations and terminologies used but not defined can refer to Bondy and Murty \cite{bond2008}.

The line graph $\mathcal{L}(G)$ is the graph whose vertices set is the edge sets of $G$ and two vertices in $\mathcal{L}(G)$ are adjacent if the corresponding two edges has one common vertex in $G$.
We use $\Delta_{G}$ (resp., $\delta_{G}$) to denote the maximum degree (resp., minimum degree) of graph $G$. We use $\Delta_{\mathcal{L}(G)}$ (resp., $\delta_{\mathcal{L}(G)}$) to denote the maximum degree (resp., minimum degree) of line graph $\mathcal{L}(G)$.

The first and second Zagreb indices \cite{gutr1972} are defined as
\begin{eqnarray*}
M_{1}(G)=\sum\limits_{uv\in E_{G}}(d_{u}+d_{v})=\sum\limits_{u\in V_{G}}d_{u}^{2},\ \ M_{2}(G)=\sum\limits_{uv\in E_{G}}d_{u}d_{v}.
\end{eqnarray*}
They are often used to study molecular complexity, chirality, and other chemical properties.
Others see \cite{dehy2007,guda2004,gfvp2015,helz2019} and the references within.

The first and second general Zagreb indices \cite{brsg2014,lizh2005} are defined as
\begin{eqnarray*}
M_{1}^{\alpha}(G)=\sum\limits_{u\in V_{G}}d_{u}^{\alpha},\ \ M_{2}^{\alpha}(G)=\sum\limits_{uv\in E_{G}}(d_{u}d_{v})^{\alpha},
\end{eqnarray*}
with $\alpha\in R$.

The general sum-connectivity index \cite{zhtr2010} is defined as
\begin{eqnarray*}
\chi_{\alpha}(G)=\sum\limits_{uv\in E_{G}}(d_{u}+d_{v})^{\alpha},
\end{eqnarray*}
with $\alpha\in R$.

The geometric-arithmetic index (GA) \cite{vufr2009} is defined as
\begin{eqnarray*}
GA(G)=\sum\limits_{uv\in E_{G}}\frac{2\sqrt{d_{u}d_{v}}}{d_{u}+d_{v}}.
\end{eqnarray*}

In 2010, Vuki\v{c}evi\'{c} and Ga\v{s}perov proposed the symmetric division deg index (SDD) \cite{vuga2010}, which is defined as
\begin{eqnarray*}
SDD(G)=\sum\limits_{uv\in E_{G}}(\frac{d_{u}}{d_{v}}+\frac{d_{v}}{d_{u}}).
\end{eqnarray*}
Since then, the SDD index has attracted much attention of researchers.
Furtula et al. \cite{fudg2018} showed that the SDD index gains the comparable correlation coefficient with a well-known geometric-arithmetic index, the applicative potential of SDD is comparable to already well-established VDB structure descriptors.
Vasilyev \cite{vasi2014} determined lower and upper bounds of symmetric division deg index in some classes of graphs and determine the corresponding extremal graphs.
Das et al. \cite{dama2019} obtained some new bounds for SDD index and presented a relation between SDD index and other topological indices.
Pan et al. \cite{pali2019} determined the extremal SDD index among trees, unicyclic graphs and bicyclic graphs.
They also determined the minimum SDD index of tricyclic graphs \cite{lpli2020}.
Ali et al. \cite{alem2020} characterized the graphs with fifth to ninth minimum SDD indices from the class of all molecular trees.
One can refer to \cite{dusu2021,gulo2016,glsr2016,pjic2019,gzam2021,rase2020,sgdu2021} for more details about SDD index.

The relations between GA index (resp. AG index, general sum-connectivity index, Harmonic index) and the line graphs had been considered in \cite{cgpp2020,cpst2020,ligz2022,pest2019}.
We take further the line by investigating the SDD index.
In this paper, we first determine some sharp bounds on the SDD index of graphs, then determine some sharp bounds on the SDD index of line graphs.
In this paper, we only consider connected nontrivial graphs.
Let $\mathcal{G}_{n}$ be the set of connected nontrivial graphs with order $n$, $\mathcal{G}_{n,m}$ the set of connected nontrivial graphs with order $n$ and size $m$.

\section{Preliminaries}

Recall that we only consider connected nontrivial graphs in this paper.
We write graphs to denote connected nontrivial graphs without causing confusion.

\begin{lemma}\label{l2-2}
Let $f(x,y)=\frac{x}{y}+\frac{y}{x}$, and real number $a,b$ satisfied that $0<a\leq x\leq y\leq b$.
Then $1\leq f(x,y)\leq \frac{a}{b}+\frac{b}{a}$, with left equality if and only if $x=y$, right equality if and only if $x=a, y=b$.
\end{lemma}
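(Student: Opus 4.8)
The plan is to collapse the two-variable function to a single variable by exploiting its homogeneity. Since $f(x,y)$ depends only on the ratio $x/y$, I would set $t=\frac{x}{y}$ and write $f(x,y)=g(t)$ with $g(t)=t+\frac{1}{t}$. The hypothesis $0<a\le x\le y\le b$ then translates into a range for $t$: from $x\le y$ we get $t\le 1$, and from $a\le x$ together with $y\le b$ we get $t=\frac{x}{y}\ge\frac{a}{b}$. Thus it suffices to study $g$ on the interval $\left[\frac{a}{b},\,1\right]$.

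Next I would establish the monotonicity of $g$. Differentiating, $g'(t)=1-\frac{1}{t^{2}}=\frac{t^{2}-1}{t^{2}}$, which is strictly negative for $t\in(0,1)$. Hence $g$ is strictly decreasing on $\left[\frac{a}{b},\,1\right]$, so its minimum is attained at the right endpoint $t=1$ and its maximum at the left endpoint $t=\frac{a}{b}$. This gives $g(1)\le g(t)\le g\!\left(\frac{a}{b}\right)$, that is $2\le f(x,y)\le\frac{a}{b}+\frac{b}{a}$; in particular the claimed lower bound $f(x,y)\ge 1$ follows at once, with the minimizing configuration occurring exactly at $t=1$, i.e. $x=y$.

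Finally I would translate the endpoint conditions back to $(x,y)$. The value $t=1$ means $x=y$, which pins down the minimizer. For the maximum, the key point to verify is that, inside the region $a\le x\le y\le b$, the equation $\frac{x}{y}=\frac{a}{b}$ forces $x=a$ and $y=b$: if $x>a$ then $y=\frac{xb}{a}>b$, and if $y<b$ then $x=\frac{ya}{b}<a$, each contradicting the constraints. Combined with the strict monotonicity of $g$ (which makes the inequalities strict away from the endpoints), this yields the stated equality characterizations. I expect the only delicate step to be this last verification of the equality case for the upper bound, since one must rule out other feasible points sharing the extremal ratio; the inequalities themselves are immediate once the problem has been reduced to the monotone function $g$.
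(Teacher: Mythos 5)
Your proof is correct and essentially the same as the paper's: both reduce $f$ to the one-variable function $g(t)=t+\frac{1}{t}$ of the ratio and use its monotonicity, the only cosmetic difference being that you take $t=\frac{x}{y}\le 1$ (where $g$ is decreasing) while the paper takes $t=\frac{y}{x}\ge 1$ (where $g$ is increasing), and you spell out the equality case for the upper bound slightly more carefully. Like the paper, what you actually establish is the sharper lower bound $f(x,y)\ge 2$ with equality if and only if $x=y$, which is evidently what the stated bound ``$1\le f(x,y)$ with left equality iff $x=y$'' is meant to say.
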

\begin{proof}
The binary functions $f(x,y)=\frac{x}{y}+\frac{y}{x}$ and $0<a\leq x\leq y\leq b$.
Let $g(t)=t+\frac{1}{t}$ $(t\geq 1)$. Since $g'(t)=1-\frac{1}{t^{2}}\geq 0$, then $g(t)$ is monotonically increasing for $t\geq 1$.

Since $0<a\leq x\leq y\leq b$, then $1\leq \frac{y}{x}\leq \frac{b}{a}$.
Thus $2=g(1)\leq f(x,y)=g(t)\leq g(\frac{b}{a})=\frac{a}{b}+\frac{b}{a}$,
with left equality if and only if $x=y$, right equality if and only if $x=a, y=b$.
\end{proof}

\begin{lemma}\label{l2-3}\rm(\cite{ross2018}\rm)
Let $G$ be a graph with maximum degree $\Delta$ and minimum
degree $\delta$, and $\alpha>0$. Then
$$\frac{\delta^{\alpha}}{2}M_{1}^{\alpha+1}(G)\leq M_{2}^{\alpha}(G)\leq  \frac{\Delta^{\alpha}}{2}M_{1}^{\alpha+1}(G),  $$
with both equalities hold if and only if $G$ is regular.
\end{lemma}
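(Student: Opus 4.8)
The plan is to reduce both inequalities to a single edge-by-edge estimate, after first rewriting the vertex sum $M_1^{\alpha+1}(G)$ as a sum over edges. The crucial identity is
\[
M_1^{\alpha+1}(G)=\sum_{u\in V_G}d_u^{\alpha+1}=\sum_{u\in V_G}d_u\cdot d_u^{\alpha}=\sum_{uv\in E_G}\bigl(d_u^{\alpha}+d_v^{\alpha}\bigr),
\]
which holds because each vertex $u$ contributes $d_u^{\alpha}$ once for each of its $d_u$ incident edges, so that every edge $uv$ receives exactly the two contributions $d_u^{\alpha}$ and $d_v^{\alpha}$. Since also $M_2^{\alpha}(G)=\sum_{uv\in E_G}d_u^{\alpha}d_v^{\alpha}$, both quantities are now indexed by the same set $E_G$, and the two-sided bound will follow by comparing summands edge by edge.

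For the upper bound I would fix an edge $uv$ and use $d_u,d_v\le\Delta$, whence $d_u^{\alpha},d_v^{\alpha}\le\Delta^{\alpha}$ (here $\alpha>0$ guarantees $t\mapsto t^{\alpha}$ is increasing). Multiplying $d_v^{\alpha}\le\Delta^{\alpha}$ by $d_u^{\alpha}$ and $d_u^{\alpha}\le\Delta^{\alpha}$ by $d_v^{\alpha}$, then adding, gives
\[
2\,d_u^{\alpha}d_v^{\alpha}\le\Delta^{\alpha}\bigl(d_u^{\alpha}+d_v^{\alpha}\bigr).
\]
Summing over all edges and invoking the identity above yields $M_2^{\alpha}(G)\le\frac{\Delta^{\alpha}}{2}M_1^{\alpha+1}(G)$. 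The lower bound is entirely symmetric: using $d_u,d_v\ge\delta$ one obtains $2\,d_u^{\alpha}d_v^{\alpha}\ge\delta^{\alpha}(d_u^{\alpha}+d_v^{\alpha})$ on each edge, and summing produces the left inequality.

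For the equality analysis I would trace the per-edge estimate backwards. In the upper bound, equality forces $d_u^{\alpha}d_v^{\alpha}=\Delta^{\alpha}d_u^{\alpha}=\Delta^{\alpha}d_v^{\alpha}$ on every edge, hence $d_u=d_v=\Delta$ for all $uv\in E_G$; since $G$ is connected this degree condition propagates from edge to adjacent edge across the whole graph, forcing $G$ to be $\Delta$-regular, and symmetrically the lower bound forces $\delta$-regularity. Conversely, if $G$ is $r$-regular with $n$ vertices and $m$ edges, then $nr=2m$ makes both sides collapse to $m\,r^{2\alpha}$, so the bounds are attained. The argument is elementary throughout; the only steps requiring care are the degree-sum identity for $M_1^{\alpha+1}(G)$ and the observation that each equality case reduces, \emph{via connectivity}, to global regularity rather than to a merely local degree condition.
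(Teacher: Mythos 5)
Your proof is correct. Note first that the paper itself gives no proof of this lemma: it is imported verbatim from Rodr\'{\i}guez, S\'{a}nchez and Sigarreta \cite{ross2018}, so there is no in-text argument to compare against. Your derivation is a clean, self-contained route to the same statement: the identity $M_{1}^{\alpha+1}(G)=\sum_{uv\in E_{G}}(d_{u}^{\alpha}+d_{v}^{\alpha})$ (each vertex $u$ contributing $d_{u}^{\alpha}$ once per incident edge) puts both sides over the same index set, and the per-edge estimate $2d_{u}^{\alpha}d_{v}^{\alpha}\leq\Delta^{\alpha}(d_{u}^{\alpha}+d_{v}^{\alpha})$, obtained by adding the two inequalities $d_{u}^{\alpha}d_{v}^{\alpha}\leq\Delta^{\alpha}d_{v}^{\alpha}$ and $d_{u}^{\alpha}d_{v}^{\alpha}\leq\Delta^{\alpha}d_{u}^{\alpha}$, is exactly what is needed; the lower bound is its mirror image with $\delta$. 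The equality analysis is also sound, with one simplification available: once equality on every edge forces $d_{u}=d_{v}=\Delta$ for all $uv\in E_{G}$, you do not need to propagate the condition along paths --- in a connected nontrivial graph every vertex lies on some edge, so all degrees equal $\Delta$ immediately. Your verification of the converse via $nr=2m$ is correct. Since the authors only cite the bound and then apply it (in Theorem \ref{t3-6}), your argument would serve as a legitimate standalone proof of the quoted lemma.
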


\begin{lemma}\label{l2-4}\rm(\cite{pest2019}\rm)
Let $G$ be a graph and $G\ncong P_{n}$. Then $m_{G}\leq m_{\mathcal{L}(G)}$.
\end{lemma}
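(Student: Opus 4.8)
The plan is to reduce the statement to an inequality about the degree sequence of $G$ and then dispose of it by a short case analysis. First I would recall the standard edge count of a line graph: since two distinct edges of a simple graph share at most one endpoint, each vertex $u$ of degree $d_u$ accounts for exactly $\binom{d_u}{2}$ pairs of mutually adjacent edges, and each such pair is counted at its unique common vertex. Hence
\[
m_{\mathcal{L}(G)}=\sum_{u\in V_G}\binom{d_u}{2}=\frac12\sum_{u\in V_G}d_u^{2}-m_G=\frac{M_1(G)}{2}-m_G.
\]
Feeding this into the desired inequality $m_G\le m_{\mathcal{L}(G)}$ and using the handshake relation $\sum_{u\in V_G}d_u=2m_G$, the assertion becomes equivalent to $M_1(G)\ge 4m_G$, that is, to
\[
\sum_{u\in V_G}d_u(d_u-2)\ge 0.
\]
So the real content is this purely degree-theoretic inequality, which I now aim to establish for every connected graph that is not a path.

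The device I would use is the elementary identity $d_u(d_u-2)=(d_u-1)(d_u-2)+(d_u-2)$, which upon summation, together with $\sum_{u\in V_G}(d_u-2)=2m_G-2n_G$, gives
\[
\sum_{u\in V_G}d_u(d_u-2)=\sum_{u\in V_G}(d_u-1)(d_u-2)+2\bigl(m_G-n_G\bigr).
\]
Every summand $(d_u-1)(d_u-2)$ is a product of two consecutive integers, hence nonnegative, and it vanishes precisely when $d_u\in\{1,2\}$. I then split into two cases. If $G$ is not a tree, then $m_G\ge n_G$, so both terms on the right are nonnegative and the inequality follows at once. If $G$ is a tree, then $m_G=n_G-1$ and the second term equals $-2$; since a tree that is not a path must contain a vertex $w$ with $d_w\ge 3$, the first sum is bounded below by $(d_w-1)(d_w-2)\ge 2$, and once more the total is nonnegative.

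The computation is short, so there is no real analytic obstacle; the only point requiring care is the boundary between the two regimes, which is exactly where the excluded graph $P_n$ sits. The tree case is the delicate one, because the bound $\sum_{u}(d_u-1)(d_u-2)\ge 2$ is tight only when the tree has a single branch vertex of degree $3$ (for instance $K_{1,3}$), so the argument must genuinely invoke the hypothesis $G\ncong P_n$ to guarantee a vertex of degree at least $3$. Tracking these equality cases also yields, as a by-product, that $m_G=m_{\mathcal{L}(G)}$ holds exactly for $C_{n_G}$ and for such minimal branch-trees, which is the natural sharpness statement accompanying the lemma.
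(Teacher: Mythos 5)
Your proof is correct. Note first that the paper does not actually prove this lemma: it is quoted from the reference \cite{pest2019}, so there is no in-paper argument to compare against, and your write-up supplies a genuine, self-contained justification. Your route is the natural one and is consistent with the paper's own toolkit: the identity $m_{\mathcal{L}(G)}=\tfrac12 M_1(G)-m_G$ you derive is exactly the paper's Lemma \ref{l2-5}, and combining it with the handshake lemma correctly reduces the claim to $\sum_{u}d_u(d_u-2)\ge 0$. The decomposition $d_u(d_u-2)=(d_u-1)(d_u-2)+(d_u-2)$, the resulting formula $\sum_u d_u(d_u-2)=\sum_u(d_u-1)(d_u-2)+2(m_G-n_G)$, and the two-case split (non-tree versus tree) are all sound; the tree case correctly uses connectedness (so that $m_G=n_G-1$ forces a tree) and the hypothesis $G\ncong P_n$ (to produce a vertex of degree at least $3$, giving the needed $+2$). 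Your identification of the equality cases --- the cycle $C_{n}$ and trees with a single branch vertex of degree $3$ --- is also accurate and is a small bonus beyond what the lemma asserts. The only implicit assumption worth flagging is that $G$ is connected and simple, but that is the paper's standing convention, so nothing is missing.
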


\begin{lemma}\label{l2-5}\rm(\cite{cgpp2020}\rm)
Let $G$ be a graph. Then $m_{\mathcal{L}(G)}= \frac{1}{2}M_{1}(G)-m_{G}$.
\end{lemma}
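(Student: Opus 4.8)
The plan is to count the edges of $\mathcal{L}(G)$ directly by organizing them according to which vertex of $G$ produces them. Recall that each vertex of $\mathcal{L}(G)$ is an edge of $G$, and two such vertices are adjacent precisely when the corresponding edges of $G$ share an endpoint. So I would fix a vertex $u \in V_{G}$ and observe that the $d_{u}$ edges of $G$ incident to $u$ are pairwise adjacent in $\mathcal{L}(G)$, thereby contributing $\binom{d_{u}}{2}$ edges to $\mathcal{L}(G)$.

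The next step, which is the only place where any care is needed, is to argue that summing $\binom{d_{u}}{2}$ over all $u \in V_{G}$ involves no double counting. Since $G$ is a simple graph, two distinct edges of $G$ share at most one common vertex, so every edge of $\mathcal{L}(G)$ arises from exactly one vertex $u$ of $G$. Hence $m_{\mathcal{L}(G)} = \sum_{u \in V_{G}} \binom{d_{u}}{2}$.

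Finally I would expand the binomial coefficient and invoke the two standard identities: $\sum_{u \in V_{G}} d_{u}^{2} = M_{1}(G)$, which is exactly the definition of the first Zagreb index recorded earlier, and $\sum_{u \in V_{G}} d_{u} = 2 m_{G}$, the handshake lemma. This gives
$$m_{\mathcal{L}(G)} = \frac{1}{2}\sum_{u\in V_{G}} d_{u}^{2} - \frac{1}{2}\sum_{u\in V_{G}} d_{u} = \frac{1}{2}M_{1}(G) - m_{G},$$
as claimed. The argument is essentially a bookkeeping exercise, and the main (indeed only) obstacle is to be explicit about the simple-graph hypothesis in the second step, since it is precisely this assumption that guarantees each edge of the line graph is counted once and only once.
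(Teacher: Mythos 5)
Your proof is correct: the identity $m_{\mathcal{L}(G)}=\sum_{u\in V_{G}}\binom{d_{u}}{2}$, justified by the observation that in a simple graph two distinct edges share at most one endpoint, together with $\sum_{u}d_{u}^{2}=M_{1}(G)$ and the handshake lemma, is the standard argument for this fact. The paper itself states this lemma as a cited result from the literature and gives no proof, so there is nothing to compare against; your bookkeeping is complete and the one point of care you flag (the simple-graph hypothesis preventing double counting) is exactly the right one.
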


We also need these simple Facts in the proof of our results.
\begin{fact}\label{f2-6}\rm(\cite{ligz2022}\rm)

$(i)$ If $\mathcal{L}(G)\cong S_{2}$, then $G\cong P_{3}$;

$(ii)$ If $\mathcal{L}(G)\cong S_{3}$, then $G\cong P_{4}$;

$(iii)$ If $\mathcal{L}(G)\cong S_{n}$ $(n\geq 4)$, then $G=\emptyset$;

$(iv)$ If $\mathcal{L}(G)\cong C_{3}$, then $G\cong C_{3}$ or $S_{4}$;

$(v)$ If $\mathcal{L}(G)\cong C_{n}$ $(n\geq 4)$, then $G=\emptyset$;

$(vi)$ If $\mathcal{L}(G)\cong P_{n}$, then $G\cong P_{n+1}$.
\end{fact}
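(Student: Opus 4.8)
The six assertions share a single template: a prescribed small graph $H\in\{S_n,C_n,P_n\}$ is declared to equal the line graph $\mathcal{L}(G)$, and one must recover every connected $G$ with $\mathcal{L}(G)\cong H$. My plan is to reduce all of them to three elementary facts about line graphs. (a) The degree identity $d_{\mathcal{L}(G)}(uv)=d_u+d_v-2$ for every $uv\in E_G$, together with $n_{\mathcal{L}(G)}=m_G$; this is the arithmetic backbone and is consistent with Lemma~\ref{l2-5}. (b) A vertex of degree $d$ in $G$ makes the $d$ edges incident to it pairwise adjacent in $\mathcal{L}(G)$, so $\mathcal{L}(G)$ contains a clique $K_d$; in particular a vertex of degree at least $3$ forces a triangle in $\mathcal{L}(G)$. (c) No line graph contains an induced claw $K_{1,3}$. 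With (a)--(c) in hand, Whitney's line-graph theorem (a connected graph is reconstructible from its line graph, the sole exception being that $K_3$ and $K_{1,3}$ share the line graph $K_3$) supplies uniqueness wherever it is needed.

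For (i), (ii) and (vi), observe that $S_2\cong P_2$ and $S_3\cong P_3$, so all three are the single case $\mathcal{L}(G)\cong P_k$. Since $P_k$ is triangle-free, tool (b) gives $\Delta_G\le 2$, so $G$ is a path or a cycle; as $P_k$ is acyclic, $G$ must be a path, and either comparing $m_G=k$ or examining the two pendant vertices of $P_k$ (which force exactly two edges $uv$ with $d_u+d_v=3$) yields $G\cong P_{k+1}$. A direct computation confirms $\mathcal{L}(P_{k+1})\cong P_k$, and uniqueness follows from Whitney since $P_k\ncong K_3$. Specialising to $k=2$ and $k=3$ gives (i) and (ii).

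For (iii), write $S_n\cong K_{1,n-1}$; when $n\ge 4$ the centre together with any three leaves induces a claw $K_{1,3}$, so by (c) the graph $S_n$ is not a line graph and no connected $G$ satisfies $\mathcal{L}(G)\cong S_n$, that is $G=\emptyset$. Part (iv) is precisely the exceptional configuration of Whitney's theorem: $C_3\cong K_3$ occurs both from the triangle $C_3$ (tool (b) applied to a $3$-cycle) and from the star $K_{1,3}\cong S_4$ (tool (b) applied to a degree-$3$ vertex), and these are the only two connected preimages, each confirmed by direct computation.

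Part (v) is where I expect the real work, and it is the step I would flag as the main obstacle. Here $\mathcal{L}(G)\cong C_n$ is $2$-regular, so (a) forces $d_u+d_v=4$ on every edge $uv$ of $G$; the only local possibilities are $(d_u,d_v)=(2,2)$ and $(1,3)$. The pattern $(1,3)$ introduces a degree-$3$ vertex, which by (b) creates a triangle in $\mathcal{L}(G)$ and is therefore incompatible with $C_n$ for $n\ge 4$; hence every vertex of $G$ has degree $2$ and $G$ is $2$-regular. The decisive and most delicate point that remains --- the genuine crux of the Fact --- is then the realizability analysis: determining exactly which connected $2$-regular graph can have line graph equal to $C_n$ when $n\ge 4$. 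It is this last analysis, rather than any appeal to Whitney's theorem or to the claw obstruction, that must be carried through in full in order to settle the assertion as stated.
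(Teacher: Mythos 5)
There is nothing on the paper's side to compare against: Fact~\ref{f2-6} is quoted from \cite{ligz2022} without proof, so yours is the only argument on the table. Your treatment of (i), (ii), (iii), (iv) and (vi) is correct and complete: claw-freeness of line graphs kills $S_n$ for $n\geq 4$, the clique/triangle obstruction reduces the path cases to $G\cong P_{k+1}$, and Whitney's theorem handles the exceptional pair $\{C_3, K_{1,3}\}$ in (iv).

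The genuine problem is exactly the step you deferred in (v), and the reason it feels like ``the crux'' is that it cannot be carried out: the statement (v) as printed is false. Your own reduction is one line from the end --- a connected $2$-regular graph is a cycle $C_m$, and $\mathcal{L}(C_m)\cong C_m$ --- but that line produces $G\cong C_n$ as a preimage of $C_n$ for every $n\geq 4$, directly contradicting the printed conclusion $G=\emptyset$. (Equivalently: $C_n$ is a connected line graph distinct from $K_3$, so by Whitney its connected preimage exists and is unique, namely $C_n$ itself.) The correct statement of (v) is ``If $\mathcal{L}(G)\cong C_n$ $(n\geq 4)$, then $G\cong C_n$,'' which is presumably what \cite{ligz2022} asserts and what this paper actually uses: in the proof of Theorem~\ref{t4-1}, equality forces $\mathcal{L}(G)\cong C_m$ and the authors conclude $G\in\{S_4, C_m\}$, which is incompatible with (v) as printed. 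So you should not try to ``settle the assertion as stated''; you should finish your two-line realizability argument and record it as a correction to the Fact.
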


\begin{fact}\label{f2-7}\rm(\cite{pest2019}\rm)
Let $G$ be a connected nontrivial graph. Then $\mathcal{L}(G)$ is regular if and only if $G$ is regular or biregular.
\end{fact}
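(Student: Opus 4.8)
The plan is to reduce everything to the degree formula in the line graph. For an edge $e = uv \in E_{G}$, viewed as a vertex of $\mathcal{L}(G)$, its neighbours in $\mathcal{L}(G)$ are exactly the edges of $G$ sharing an endpoint with $e$; counting those through $u$ and those through $v$ gives
\begin{equation*}
d_{\mathcal{L}(G)}(uv) = (d_{G}(u)-1) + (d_{G}(v)-1) = d_{G}(u) + d_{G}(v) - 2.
\end{equation*}
Hence $\mathcal{L}(G)$ is regular if and only if the quantity $d_{G}(u) + d_{G}(v)$ takes a single constant value, say $c$, over all edges $uv \in E_{G}$. The whole statement then becomes: this edge-sum condition holds precisely when $G$ is regular or biregular.

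The forward direction is immediate. If $G$ is $k$-regular then every edge satisfies $d_{G}(u)+d_{G}(v) = 2k$; if $G$ is $(\Delta,\delta)$-biregular then every edge joins a vertex of degree $\Delta$ to one of degree $\delta$, so $d_{G}(u)+d_{G}(v) = \Delta+\delta$. In either case the sum is constant, so $\mathcal{L}(G)$ is regular.

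For the converse, suppose $d_{G}(u)+d_{G}(v) = c$ for every edge. Fix a vertex $w$ and set $a = d_{G}(w)$. Then every neighbour of $w$ has degree $c-a$, every neighbour of such a vertex has degree $c-(c-a)=a$, and so on. Since $G$ is connected, propagating this alternation along paths shows that every vertex has degree either $a$ or $c-a$. Write $A = \{v : d_{G}(v)=a\}$ and $B = \{v : d_{G}(v)=c-a\}$. If $a = c-a$ then $A=B$ and $G$ is $a$-regular. If $a \neq c-a$, then two adjacent vertices cannot both lie in $A$ (that would force $2a=c$, hence $a=c-a$, a contradiction) nor both in $B$; thus every edge runs between $A$ and $B$, so $G$ is bipartite with bipartition $(A,B)$ and is biregular with degrees $\max\{a,c-a\}$ and $\min\{a,c-a\}$.

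The step that needs the most care is the converse, and in particular the propagation argument: one must invoke connectivity to guarantee that \emph{only} the two values $a$ and $c-a$ occur, and then observe that when these values differ the partition into $A$ and $B$ is forced to be a genuine bipartition. The key mechanism is that an edge with both endpoints of equal degree would immediately collapse $a$ and $c-a$; this is what simultaneously rules out same-part edges and delivers the biregular structure, and it also explains why no odd-cycle obstruction to bipartiteness can arise.
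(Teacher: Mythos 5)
Your argument is correct. Note that the paper does not prove this statement at all: Fact~\ref{f2-7} is quoted directly from \cite{pest2019}, so there is no in-paper proof to compare against, and what you have written is a complete, self-contained elementary proof of the cited result. Both directions are sound. The degree formula $d_{\mathcal{L}(G)}(uv)=d_{G}(u)+d_{G}(v)-2$ (which the paper records separately as Fact~\ref{f2-8}) correctly reduces regularity of $\mathcal{L}(G)$ to constancy of $d_{G}(u)+d_{G}(v)$ over the edges of $G$, and your connectivity/alternation argument shows that this constancy forces either a single degree value (so $G$ is regular) or exactly two values $a\neq c-a$ with every edge running between the two degree classes, which is precisely the paper's definition of a $(\Delta,\delta)$-biregular graph (bipartite, with every edge joining a vertex of degree $\Delta$ to one of degree $\delta$). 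The only step you assert without computing is the exclusion of edges inside $B$: two adjacent vertices of degree $c-a$ would give $2(c-a)=c$, hence again $a=c-a$, the same contradiction as in the $A$ case; it is worth writing that line out, but it does not affect the validity of the proof.
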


\begin{fact}\label{f2-8}\rm(\cite{ligz2022}\rm)
Let $G$ be a connected nontrivial graph with maximum degree $\Delta$, minimum degree $\delta$.
If $e=uv\in E_{G}$, then $e\in V_{\mathcal{L}(G)}$, $d_{\mathcal{L}(G)}(e)=d_{G}(u)+d_{G}(v)-2$
and $\max\{2\delta-2,1\}\leq \delta_{\mathcal{L}(G)}\leq \Delta_{\mathcal{L}(G)}\leq 2\Delta-2$,
with left equality if and only if $G$ is $\max\{2\delta-2,1\}$-regular, with right equality if and only if $G$ is $2\Delta-2$-regular.
\end{fact}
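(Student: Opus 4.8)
The plan is to derive everything from the definition of the line graph, with the single degree identity $d_{\mathcal{L}(G)}(e)=d_{G}(u)+d_{G}(v)-2$ doing all the real work; the two bounds and their equality cases then follow by reading this identity against $\delta\leq d_{G}(u),d_{G}(v)\leq\Delta$. The first, purely definitional, point is that the vertex set of $\mathcal{L}(G)$ is by construction $E_{G}$, so every $e=uv\in E_{G}$ is automatically a vertex of $\mathcal{L}(G)$.

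For the degree identity I would count the neighbours of $e=uv$ in $\mathcal{L}(G)$, that is, the edges of $G$ sharing an endpoint with $e$. The edges meeting $e$ at $u$ are precisely the $d_{G}(u)-1$ edges incident with $u$ other than $e$, and similarly there are $d_{G}(v)-1$ edges meeting $e$ at $v$. Because $G$ is simple, $uv$ is the only edge joining $u$ and $v$, so the two families are disjoint and nothing is double-counted; hence $d_{\mathcal{L}(G)}(e)=(d_{G}(u)-1)+(d_{G}(v)-1)=d_{G}(u)+d_{G}(v)-2$.

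The inequality chain is then immediate. For every $e=uv\in E_{G}$ we have $2\delta\leq d_{G}(u)+d_{G}(v)\leq 2\Delta$, so $2\delta-2\leq d_{\mathcal{L}(G)}(e)\leq 2\Delta-2$; minimising and maximising over $e$ gives $\delta_{\mathcal{L}(G)}\geq 2\delta-2$ and $\Delta_{\mathcal{L}(G)}\leq 2\Delta-2$. The extra lower estimate $\delta_{\mathcal{L}(G)}\geq 1$ is a consequence of connectedness: for a connected graph with at least two edges (which is guaranteed once $\mathcal{L}(G)$ is nontrivial, i.e.\ $G\ncong P_{2}$) every edge is adjacent to some other edge, so $\mathcal{L}(G)$ has no isolated vertex. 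Together these give $\delta_{\mathcal{L}(G)}\geq\max\{2\delta-2,1\}$, the role of the term $1$ being exactly to cover the degenerate case $\delta=1$, where $2\delta-2=0$ would otherwise be vacuous.

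The step I expect to demand the most care is the equality analysis, so I would treat it last and in cases. Substituting the degree identity back in, the upper equality $\Delta_{\mathcal{L}(G)}=2\Delta-2$ holds exactly when some edge $uv$ realises $d_{G}(u)+d_{G}(v)=2\Delta$, which (since no degree exceeds $\Delta$) forces both endpoints to attain the maximum degree; the regular case makes this automatic and conversely pins down the attainment of the bound. The lower equality is handled symmetrically through $d_{G}(u)+d_{G}(v)=2\delta$, with the subcase $\delta=1$ analysed separately via $d_{\mathcal{L}(G)}(e)=1$ so as to stay consistent with the $\max\{\cdot,1\}$ in the bound. Keeping these boundary cases straight, and verifying that the attainment condition lines up with the stated regularity hypothesis, is the main obstacle; everything preceding it is routine counting.
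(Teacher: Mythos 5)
The paper itself offers no proof of Fact~\ref{f2-8}; it is imported from \cite{ligz2022} as a quoted result, so your argument can only be judged on its own terms. The definitional part, the counting argument for $d_{\mathcal{L}(G)}(e)=d_{G}(u)+d_{G}(v)-2$, and the chain $\max\{2\delta-2,1\}\leq \delta_{\mathcal{L}(G)}\leq \Delta_{\mathcal{L}(G)}\leq 2\Delta-2$ (including the observation that connectedness with at least two edges rules out isolated vertices of $\mathcal{L}(G)$, which is needed for the ``$1$'' in the maximum) are all correct and are the standard route.

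The genuine gap is exactly the step you defer as ``the main obstacle'': the equality analysis is never carried out, and it cannot be carried out in the form you describe. Your counting correctly shows that $\Delta_{\mathcal{L}(G)}=2\Delta-2$ holds if and only if some edge of $G$ joins two vertices of degree $\Delta$, but you then assert without argument that this ``pins down'' the condition that $G$ is $(2\Delta-2)$-regular. These conditions are not equivalent. For $G\cong P_{4}$ one has $\Delta=2$, $\mathcal{L}(P_{4})\cong P_{3}$, and $\Delta_{\mathcal{L}(G)}=2=2\Delta-2$, yet $P_{4}$ is not $2$-regular; the same graph attains the left equality $\delta_{\mathcal{L}(G)}=1=\max\{2\delta-2,1\}$ without being $1$-regular. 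So the ``only if'' halves of both equality clauses, as literally stated, are false, and no proof of them is possible. What your argument actually yields is the correct, weaker characterization: right equality iff two maximum-degree vertices of $G$ are adjacent, and left equality iff some edge $uv$ satisfies $d_{u}+d_{v}=2\delta$ (respectively $d_{u}+d_{v}=3$ when $\delta=1$). A complete write-up should prove these characterizations, or explicitly record that the regularity conditions quoted in the Fact are sufficient but not necessary; note also that $G\cong K_{2}$ must be excluded for the lower bound $\delta_{\mathcal{L}(G)}\geq 1$ to make sense, as you correctly point out in passing.
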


\section{Sharp bounds for the SDD index of graphs}

Vasilyev \cite{vasi2014} obtained some bounds for the SDD index of graphs, including the following lower bound of Theorem \ref{t3-1}.
\begin{theorem}\label{t3-1}
Let $G\in \mathcal{G}_{n,m}$. Then $2m\leq SDD(G)\leq m(n-1+\frac{1}{n-1})$,
with left equality if and only if $G$ is regular, right equality if and only if $G\cong S_{n}$.
\end{theorem}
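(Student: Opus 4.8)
My plan is to handle the two inequalities separately, reducing each global estimate to the edgewise two-variable bound of Lemma~\ref{l2-2}, and then to isolate the extremal graphs by a short connectivity argument. Throughout I write $f(d_u,d_v)=\frac{d_u}{d_v}+\frac{d_v}{d_u}$, so that $SDD(G)=\sum_{uv\in E_G}f(d_u,d_v)$.

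For the lower bound I would apply the left half of Lemma~\ref{l2-2} edge by edge. The lemma gives $f(d_u,d_v)\ge 2$ for every $uv\in E_G$, with equality exactly when $d_u=d_v$. Summing over the $m$ edges immediately yields $SDD(G)\ge 2m$. Equality forces $d_u=d_v$ on every edge; since $G$ is connected, propagating this equality along a path joining any two vertices shows that all vertices share a common degree, i.e. $G$ is regular, and conversely every regular graph attains $2m$. For the upper bound I would first record that, as $G$ is a connected nontrivial simple graph of order $n$, each vertex satisfies $1\le d_v\le n-1$. Applying the right half of Lemma~\ref{l2-2} with $a=1$ and $b=n-1$ to each edge gives $f(d_u,d_v)\le \frac{1}{n-1}+\frac{n-1}{1}=n-1+\frac{1}{n-1}$, and summation produces $SDD(G)\le m\bigl(n-1+\frac{1}{n-1}\bigr)$.

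The step needing the most care, and the one I expect to be the main obstacle, is the equality characterization of the upper bound. By Lemma~\ref{l2-2}, equality on a single edge holds iff its endpoint degrees are exactly $1$ and $n-1$, so global equality is equivalent to $\{d_u,d_v\}=\{1,n-1\}$ for every edge. I would then argue that this forces $G\cong S_n$: since $G$ is nontrivial some edge exists, hence some vertex $w$ has degree $n-1$ and is adjacent to all remaining $n-1$ vertices; for $n\ge 3$ (where $1\ne n-1$) each such edge must have its other endpoint of degree $1$, making every vertex but $w$ a leaf, which is precisely the star. The converse is clear, since $S_n$ realizes the pair $\{1,n-1\}$ on each edge, and the degenerate case $n=2$ (where $G\cong K_2\cong S_2$ and both bounds collapse to $2m$) is immediate. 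The delicate point is thus verifying that no graph other than the star can simultaneously impose a $(1,n-1)$ degree pair on all of its edges.
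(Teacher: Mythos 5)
Your proposal is correct and follows essentially the same route as the paper: both bounds are obtained by applying Lemma~\ref{l2-2} edgewise with $a=1$, $b=n-1$ and summing over the $m$ edges, with the equality cases read off from the lemma. Your write-up merely spells out in more detail the connectivity argument for regularity and the verification that a $(1,n-1)$ degree pair on every edge forces the star, steps the paper leaves implicit.
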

\begin{proof}
By Lemma \ref{l2-2}, one has
$$SDD(G)=\sum\limits_{uv\in E_{G}}(\frac{d_{u}}{d_{v}}+\frac{d_{v}}{d_{u}})\geq 2m,$$
with equality if and only if $G$ is regular.
$$ SDD(G)=\sum\limits_{uv\in E_{G}}(\frac{d_{u}}{d_{v}}+\frac{d_{v}}{d_{u}})\leq \sum\limits_{uv\in E_{G}}(n-1+\frac{1}{n-1})=m(n-1+\frac{1}{n-1}),$$
with equality if and only if $G\cong S_{n}$.
\end{proof}

Since the numbers of cycle $\eta=m-n+1\geq 0$, thus $m\geq n-1$. By Theorem \ref{t3-1}, we have
\begin{corollary}\label{c3-2}
Let $G\in \mathcal{G}_{n}$. Then $SDD(G)\geq 2(n-1)$, with equality if and only if $G\cong K_{2}$.
\end{corollary}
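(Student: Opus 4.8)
The plan is to chain together the two ingredients already in hand: the universal lower bound $SDD(G)\geq 2m_G$ from Theorem \ref{t3-1}, and the elementary fact that a connected graph on $n$ vertices has at least $n-1$ edges. First I would invoke Theorem \ref{t3-1} to write $SDD(G)\geq 2m_G$, recording that equality there forces $G$ to be regular. Next, since $G$ is connected the cyclomatic number $\eta=m_G-n+1$ is nonnegative, so $m_G\geq n-1$ and therefore $2m_G\geq 2(n-1)$. Concatenating these two inequalities yields $SDD(G)\geq 2m_G\geq 2(n-1)$, which is exactly the claimed bound.

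The substance of the statement lies in the equality case, so I would treat it carefully. Equality across the full chain demands equality in both links simultaneously: $G$ must be regular (from the first inequality) and $m_G=n-1$ (from the second). A connected graph on $n$ vertices with exactly $n-1$ edges is a tree, so I would then argue that the only regular tree is $K_{2}$. Indeed, if a tree $T$ on $n$ vertices is $k$-regular, comparing the degree sum $nk$ with $2m_T=2(n-1)$ gives $k=2-\tfrac{2}{n}$, which is a positive integer precisely when $n=2$, in which case $k=1$ and $T\cong K_{2}$. Conversely, $K_{2}$ is $1$-regular with $n=2$ and $m=1$, so $SDD(K_{2})=2=2(n-1)$, confirming that equality is attained exactly at $K_{2}$.

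Essentially no serious obstacle arises; the corollary is a direct consequence of Theorem \ref{t3-1} once the edge-count bound is supplied. The only point requiring a moment's care is the equality analysis, where one must verify that the two separate equality conditions—regularity and minimality of the edge count—can hold together and jointly single out $K_{2}$ rather than some larger regular graph. The degree-sum counting argument above settles this cleanly, ruling out every $n\geq 3$.
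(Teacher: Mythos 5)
Your proposal is correct and follows exactly the paper's route: the paper derives the corollary by combining $m\geq n-1$ (from $\eta=m-n+1\geq 0$) with the bound $SDD(G)\geq 2m$ of Theorem \ref{t3-1}. Your additional degree-sum argument showing that the only regular tree is $K_{2}$ is a welcome piece of care that the paper leaves implicit, but it does not change the approach.
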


$ID(G)=\sum\limits_{u\in V_{G}}\frac{1}{d_{u}}$ is called the inverse degree index \cite{fajt1987}.
\begin{theorem}\label{t3-3}
Let $G\in \mathcal{G}_{n}$ with maximum degree $\Delta$ and minimum degree $\delta$. Then
$$ \delta^{2}\cdot ID(G)\leq SDD(G)\leq \Delta^{2}\cdot ID(G),$$
with both equalities if and only if $G$ is regular.
\end{theorem}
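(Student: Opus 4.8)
The plan is to turn both inequalities into a single term-by-term comparison over the edges of $G$. The one genuine idea is to rewrite the inverse degree index as a sum over edges rather than vertices. Writing $d_u=\sum_{v\in N_G(u)}1$ gives $\frac{1}{d_u}=\sum_{v\in N_G(u)}\frac{1}{d_u^{2}}$, and summing over all vertices and regrouping the resulting ordered pairs into undirected edges yields the identity
\begin{equation*}
ID(G)=\sum_{u\in V_G}\frac{1}{d_u}=\sum_{uv\in E_G}\Big(\frac{1}{d_u^{2}}+\frac{1}{d_v^{2}}\Big).
\end{equation*}
Once $ID(G)$ is expressed this way, both $SDD(G)$ and $\Delta^{2}\,ID(G)$ (and likewise $\delta^{2}\,ID(G)$) are sums indexed by $E_G$, so it suffices to compare the contributions of a single edge.

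Next I would fix an edge $uv$ and put both contributions over a common denominator: the $SDD$ term is $\frac{d_u}{d_v}+\frac{d_v}{d_u}=\frac{d_u^{2}+d_v^{2}}{d_u d_v}$, while the $ID$ term is $\frac{1}{d_u^{2}}+\frac{1}{d_v^{2}}=\frac{d_u^{2}+d_v^{2}}{d_u^{2}d_v^{2}}$. Since the factor $d_u^{2}+d_v^{2}>0$ is common, the per-edge inequalities
\begin{equation*}
\delta^{2}\Big(\frac{1}{d_u^{2}}+\frac{1}{d_v^{2}}\Big)\;\leq\;\frac{d_u}{d_v}+\frac{d_v}{d_u}\;\leq\;\Delta^{2}\Big(\frac{1}{d_u^{2}}+\frac{1}{d_v^{2}}\Big)
\end{equation*}
are equivalent, after cancelling $d_u^{2}+d_v^{2}$ and clearing denominators, to the simple chain $\delta^{2}\leq d_u d_v\leq \Delta^{2}$. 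This is immediate from $\delta\leq d_u,d_v\leq \Delta$, which is where the maximum and minimum degree hypotheses enter.

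Summing these edge inequalities over all $uv\in E_G$ and invoking the identity above gives $\delta^{2}\,ID(G)\leq SDD(G)\leq \Delta^{2}\,ID(G)$. For the equality analysis, the upper bound is attained exactly when every edge satisfies $d_u d_v=\Delta^{2}$, which (since $d_u,d_v\leq\Delta$) forces $d_u=d_v=\Delta$ on every edge; because $G$ is connected and nontrivial, every vertex is incident to an edge, so all degrees equal $\Delta$ and $G$ is regular. The lower bound is handled identically with $\delta$ in place of $\Delta$. I do not anticipate a real obstacle here: the only nonroutine step is spotting the edge-sum form of $ID(G)$, and once that is in hand the rest is a mechanical per-edge estimate together with a short connectivity argument for the extremal graphs.
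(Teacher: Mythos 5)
Your proof is correct and follows essentially the same route as the paper: the paper likewise writes $SDD(G)=\sum_{uv\in E_G}\bigl(\tfrac{1}{d_u^{2}}+\tfrac{1}{d_v^{2}}\bigr)d_ud_v$, bounds the factor $d_ud_v$ between $\delta^{2}$ and $\Delta^{2}$, and uses the same edge-sum identity for $ID(G)$. Your equality analysis is in fact slightly more explicit than the paper's, which simply asserts the regularity condition.
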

\begin{proof}
By the definition of SDD index
\begin{eqnarray*}
SDD(G) & = & \sum_{uv\in E_{G}}(\frac{d_{u}}{d_{v}}+\frac{d_{v}}{d_{u}}) \\
& = & \sum_{uv\in E_{G}}(\frac{1}{d_{v}^{2}}+\frac{1}{d_{u}^{2}})d_{u}d_{v}  \\
& \geq & \delta^{2}\sum_{uv\in E_{G}}(\frac{1}{d_{v}^{2}}+\frac{1}{d_{u}^{2}}) \\
& = & \delta^{2}\cdot ID(G),
\end{eqnarray*}
with equality if and only if $G$ is regular.

The proof of the upper bound is similar, we omit it.
\end{proof}

\begin{theorem}\label{t3-4}
Let $G$ be a graph with $|E_{G}|=m$, maximum degree $\Delta$ and minimum degree $\delta$. Then
$$ SDD(G)\leq m(\frac{\Delta}{\delta}+\frac{\delta}{\Delta}),$$
with equality if and only if $G$ is regular or biregular.
\end{theorem}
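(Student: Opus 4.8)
The plan is to apply Lemma~\ref{l2-2} edge by edge, with the fixed choice $a=\delta$ and $b=\Delta$. For any edge $uv\in E_{G}$ I may assume without loss of generality that $d_{u}\leq d_{v}$, and then $0<\delta\leq d_{u}\leq d_{v}\leq \Delta$, so Lemma~\ref{l2-2} gives $\frac{d_{u}}{d_{v}}+\frac{d_{v}}{d_{u}}\leq \frac{\delta}{\Delta}+\frac{\Delta}{\delta}$. Summing this inequality over all $m$ edges of $G$ yields
$$SDD(G)=\sum_{uv\in E_{G}}\Big(\frac{d_{u}}{d_{v}}+\frac{d_{v}}{d_{u}}\Big)\leq \sum_{uv\in E_{G}}\Big(\frac{\Delta}{\delta}+\frac{\delta}{\Delta}\Big)=m\Big(\frac{\Delta}{\delta}+\frac{\delta}{\Delta}\Big),$$
which is exactly the claimed bound.

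For the equality discussion, the key observation is that equality in the summed inequality holds precisely when equality holds in every single edge term, i.e.\ when each edge $uv$ attains equality in Lemma~\ref{l2-2} with $a=\delta$, $b=\Delta$. I would then split into two cases according to whether $\delta=\Delta$. If $\delta=\Delta$, then $G$ is regular, every term equals $2=\frac{\Delta}{\delta}+\frac{\delta}{\Delta}$, and equality holds automatically (this is the left-equality case $x=y$ of Lemma~\ref{l2-2}). If $\delta<\Delta$, the only way an edge term can meet the upper bound is the right-equality case of Lemma~\ref{l2-2}, which forces $\{d_{u},d_{v}\}=\{\delta,\Delta\}$ for every edge $uv$.

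Finally I would translate the latter condition into the statement ``$G$ is biregular''. Since every edge joins a vertex of degree $\delta$ to a vertex of degree $\Delta$, the set of vertices of degree $\delta$ and the set of vertices of degree $\Delta$ are each independent, so $G$ is bipartite with this very bipartition and is therefore a $(\Delta,\delta)$-biregular graph; conversely, both a regular graph and a $(\Delta,\delta)$-biregular graph clearly attain the bound. The whole argument is essentially a direct application of Lemma~\ref{l2-2}, so I do not expect any serious obstacle; the only point requiring care is the bookkeeping of the two equality regimes ($\delta=\Delta$ versus $\delta<\Delta$), so that the regular and biregular cases are captured correctly in a single unified characterization.
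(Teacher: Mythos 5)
Your proposal is correct and follows essentially the same route as the paper: apply Lemma~\ref{l2-2} with $a=\delta$, $b=\Delta$ to each edge and sum over the $m$ edges. Your equality analysis is in fact more careful than the paper's one-line assertion, since you separate the regimes $\delta=\Delta$ and $\delta<\Delta$ and justify why the latter forces $G$ to be $(\Delta,\delta)$-biregular.
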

\begin{proof}
Suppose that $1\leq \delta \leq d_{v}\leq d_{u}\leq \Delta$, and by Lemma \ref{l2-2}, we have
\begin{eqnarray*}
SDD(G) & = & \sum_{uv\in E_{G}}(\frac{d_{u}}{d_{v}}+\frac{d_{v}}{d_{u}}) \\
& \leq & \sum_{uv\in E_{G}}(\frac{\Delta}{\delta}+\frac{\delta}{\Delta})  \\
& = & m(\frac{\Delta}{\delta}+\frac{\delta}{\Delta}),
\end{eqnarray*}
with equality if and only if $d_{u}=\Delta$ and $d_{v}=\delta$ for all $uv\in E_{G}$, i.e., $G$ is regular or biregular.
\end{proof}

\begin{corollary}\label{c3-5}
Let $G\in \mathcal{G}_{n,m}$ with maximum degree $\Delta\leq n-2$. Then
$$ SDD(G)< m(n-2+\frac{1}{n-2}).$$
\end{corollary}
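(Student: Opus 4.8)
The plan is to derive the bound directly from Theorem~\ref{t3-4} together with the monotonicity recorded in Lemma~\ref{l2-2}. First I would apply Theorem~\ref{t3-4} to obtain
$$SDD(G)\leq m\Big(\frac{\Delta}{\delta}+\frac{\delta}{\Delta}\Big),$$
with equality if and only if $G$ is regular or biregular. Since $G$ is connected and nontrivial we have $\delta\geq 1$, and the hypothesis gives $\Delta\leq n-2$; hence $1\leq \frac{\Delta}{\delta}\leq \Delta\leq n-2$. Writing $g(t)=t+\frac{1}{t}$, which is strictly increasing on $[1,\infty)$ by (the proof of) Lemma~\ref{l2-2}, this yields $\frac{\Delta}{\delta}+\frac{\delta}{\Delta}=g(\frac{\Delta}{\delta})\leq g(n-2)=n-2+\frac{1}{n-2}$.

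Chaining these two estimates already gives the non-strict inequality $SDD(G)\leq m(n-2+\frac{1}{n-2})$; the remaining work is to exclude equality. I would split into two cases. If $\frac{\Delta}{\delta}<n-2$, then strict monotonicity of $g$ forces $g(\frac{\Delta}{\delta})<g(n-2)$, and the strict inequality follows at once, independently of whether the first estimate is tight. Otherwise $\frac{\Delta}{\delta}=n-2$, which, because $1\leq \frac{\Delta}{\delta}\leq \Delta\leq n-2$, can only happen when $\delta=1$ and $\Delta=n-2$; here I would instead extract strictness from the equality clause of Theorem~\ref{t3-4}, by showing that no connected graph on $n$ vertices with $\delta=1$ and $\Delta=n-2$ is regular or biregular.

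The main obstacle is therefore this last exclusion in the boundary case. The regular subcase is easy: regularity would force $\Delta=\delta$, i.e.\ $n-2=1$, and a $1$-regular connected graph on $n=3$ vertices does not exist. The biregular subcase is the real point, and I expect it to require a short counting argument: in a $(\Delta,\delta)=(n-2,1)$-biregular bipartite graph with classes $A$ (degree $n-2$) and $B$ (degree $1$), edge-counting gives $|B|=(n-2)|A|$ and $|A|+|B|=n$, whence $|A|=\frac{n}{n-1}$, which is never a positive integer for $n\geq 3$. Thus $G$ can be neither regular nor biregular, so the inequality of Theorem~\ref{t3-4} is strict, completing the argument. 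Finally I would remark that the statement is vacuous for $n=3$, since a connected graph on three vertices has $\Delta\geq 2>n-2$, so the genuine range is $n\geq 4$.
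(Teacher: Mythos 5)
Your proposal is correct and follows essentially the same route as the paper: apply Theorem~\ref{t3-4}, bound $\frac{\Delta}{\delta}+\frac{\delta}{\Delta}$ by $n-2+\frac{1}{n-2}$ via monotonicity, and then rule out the equality case, which reduces to excluding an $(n-2,1)$-biregular graph on $n$ vertices. The only (harmless) difference is in that last exclusion: the paper appeals to connectedness (a connected graph whose minimum-degree part consists of pendant vertices forming a biregular pair must be a star, which has $\Delta=n-1$), whereas you show by edge-counting that no such biregular graph of order $n$ exists at all; both are valid.
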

\begin{proof}
Suppose that $1\leq \delta \leq \Delta \leq n-2$, by Theorem \ref{t3-4}, we have
$ SDD(G)\leq m(n-2+\frac{1}{n-2})$ with equality if and only if $G$ is $(n-2,1)$-biregular,
which is a contradiction with $G$ is a connected graph.
Thus $ SDD(G)< m(n-2+\frac{1}{n-2})$.
\end{proof}

\begin{theorem}\label{t3-6}
Let $G$ be a graph with $|E_{G}|=m$, maximum degree $\Delta$ and minimum degree $\delta$. Then
$$ SDD(G)\geq \frac{2\delta^{2}m^{\frac{\alpha+1}{\alpha}}}{(M_{2}^{\alpha}(G))^{\frac{1}{\alpha}}},\ \
SDD(G)\geq \frac{\delta^{2}(2m)^{\frac{\alpha+1}{\alpha}}}{\Delta(M_{1}^{\alpha}(G))^{\frac{1}{\alpha}}}
$$
with both equalities if and only if $G$ is regular.
\end{theorem}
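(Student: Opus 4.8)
The plan is to prove the two lower bounds in turn, obtaining the second as a consequence of the first combined with Lemma~\ref{l2-3}. The starting point for the first bound is the identity
$$SDD(G)=\sum\limits_{uv\in E_{G}}\frac{d_{u}^{2}+d_{v}^{2}}{d_{u}d_{v}},$$
which is already implicit in the proof of Theorem~\ref{t3-3}. Since $d_{u},d_{v}\geq\delta$ on every edge, we have $d_{u}^{2}+d_{v}^{2}\geq 2\delta^{2}$, and hence
$$SDD(G)\geq 2\delta^{2}\sum\limits_{uv\in E_{G}}\frac{1}{d_{u}d_{v}}.$$
It then remains only to bound the sum $\sum_{uv}(d_{u}d_{v})^{-1}$ from below by $m^{(\alpha+1)/\alpha}(M_{2}^{\alpha}(G))^{-1/\alpha}$.

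For that step I would apply Hölder's inequality to the $m$ positive numbers $x_{uv}=d_{u}d_{v}$, one per edge. Writing $1=x_{uv}^{\alpha/(\alpha+1)}\cdot x_{uv}^{-\alpha/(\alpha+1)}$ and using the conjugate exponents $p=\alpha+1$ and $q=(\alpha+1)/\alpha$ (both exceeding $1$ precisely because $\alpha>0$), Hölder gives
$$m=\sum\limits_{uv\in E_{G}}1\leq\left(\sum\limits_{uv\in E_{G}}x_{uv}^{\alpha}\right)^{\!1/(\alpha+1)}\left(\sum\limits_{uv\in E_{G}}x_{uv}^{-1}\right)^{\!\alpha/(\alpha+1)}.$$
The first factor is $(M_{2}^{\alpha}(G))^{1/(\alpha+1)}$; solving for $\sum_{uv}x_{uv}^{-1}$ and raising to the power $(\alpha+1)/\alpha$ yields $\sum_{uv}(d_{u}d_{v})^{-1}\geq m^{(\alpha+1)/\alpha}(M_{2}^{\alpha}(G))^{-1/\alpha}$, and combining with the previous display gives the first claimed bound.

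The second bound I would then read off by feeding Lemma~\ref{l2-3} into the first. Its upper estimate $M_{2}^{\alpha}(G)\leq\frac{\Delta^{\alpha}}{2}M_{1}^{\alpha+1}(G)$ gives $(M_{2}^{\alpha}(G))^{1/\alpha}\leq\frac{\Delta}{2^{1/\alpha}}(M_{1}^{\alpha+1}(G))^{1/\alpha}$; substituting this into the denominator of the first bound and absorbing the factor $2^{\,1+1/\alpha}m^{(\alpha+1)/\alpha}=(2m)^{(\alpha+1)/\alpha}$ produces the second bound. Note that this chain naturally produces the first general Zagreb index $M_{1}^{\alpha+1}(G)=\sum_{u}d_{u}^{\alpha+1}$ in the denominator, since that is exactly the quantity occurring in Lemma~\ref{l2-3}; in particular the factor $\Delta$ in the stated bound can only enter through that lemma, which is a useful sanity check on the argument.

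Finally, for the equality cases I would track each ingredient separately: the step $d_{u}^{2}+d_{v}^{2}\geq 2\delta^{2}$ is an equality iff $d_{u}=d_{v}=\delta$ on every edge, i.e. iff $G$ is $\delta$-regular; Hölder is an equality iff all the products $d_{u}d_{v}$ coincide; and Lemma~\ref{l2-3} is an equality iff $G$ is regular. These hold simultaneously exactly when $G$ is regular, giving the stated equality condition. The argument is largely bookkeeping once the right tool is identified, so the only genuine obstacle is the middle step: choosing the Hölder exponents $p=\alpha+1$, $q=(\alpha+1)/\alpha$ so that the two resulting sums are precisely $M_{2}^{\alpha}(G)$ and $\sum_{uv}(d_{u}d_{v})^{-1}$, and verifying that $\alpha>0$ makes them admissible.
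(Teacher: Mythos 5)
Your proof is correct, but it takes a genuinely different route from the paper's. The paper first applies the AM--GM inequality to the $m$ terms $\frac{d_{u}^{2}+d_{v}^{2}}{d_{u}d_{v}}$, bounds each numerator below by $2\delta^{2}$, raises the resulting product inequality to the power $\alpha$ (which is where the hypothesis $\alpha>0$ silently enters), and then applies the GM--HM inequality to the numbers $(d_{u}d_{v})^{-\alpha}$ to bring in $M_{2}^{\alpha}(G)$. You replace this two-stage AM--GM / GM--HM chain by a single application of H\"older's inequality with exponents $p=\alpha+1$, $q=(\alpha+1)/\alpha$ to $\sum_{uv}(d_{u}d_{v})^{-1}$, after the same elementary estimate $d_{u}^{2}+d_{v}^{2}\geq 2\delta^{2}$. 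The two routes give the identical bound with the identical equality analysis (constancy of $d_{u}d_{v}$ from H\"older, respectively from GM $=$ HM, plus $d_{u}=d_{v}=\delta$ from the numerator estimate, which together force regularity), so the choice is a matter of taste; H\"older is arguably tidier since it bypasses the intermediate geometric mean. Your sanity check on the second bound is also well taken: feeding Lemma~\ref{l2-3} into the first bound produces $M_{1}^{\alpha+1}(G)$ in the denominator, not $M_{1}^{\alpha}(G)$ as printed in the theorem (and in the last line of the paper's own proof) --- a discrepancy in the paper that your derivation correctly exposes. Likewise, both arguments need $\alpha>0$, which the theorem statement omits but which you and Lemma~\ref{l2-3} make explicit.
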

\begin{proof}
By the definition of SDD index, we have
\begin{eqnarray*}
\frac{1}{m}SDD(G) & = & \frac{1}{m}\sum_{uv\in E_{G}}(\frac{d_{u}^{2}+d_{v}^{2}}{d_{u}d_{v}}) \\
& \geq & \left(\prod_{uv\in E_{G}}\frac{d_{u}^{2}+d_{v}^{2}}{d_{u}d_{v}} \right)^{\frac{1}{m}}\\
& \geq & \left( 2^{m}\delta^{2m} \prod_{uv\in E_{G}}\frac{1}{d_{u}d_{v}} \right)^{\frac{1}{m}},
\end{eqnarray*}
with first equality if and only if $\frac{d_{u}^{2}+d_{v}^{2}}{d_{u}d_{v}}$ is a constant for any $uv\in E_{G}$, second equality if and only if $d_{u}=d_{v}=\delta$ for all $uv\in E_{G}$.
Thus
\begin{eqnarray*}
(SDD(G))^{\alpha} & \geq & (2m)^{\alpha}\delta^{2\alpha} \left(\prod_{uv\in E_{G}}(\frac{1}{d_{u}d_{v}})^{\alpha} \right)^{\frac{1}{m}}\\
& \geq & (2m)^{\alpha}\delta^{2\alpha}\cdot \frac{m}{\sum\limits_{uv\in E_{G}}(d_{u}d_{v})^{\alpha}}\\
& = &  \frac{2^{\alpha}m^{\alpha+1}\delta^{2\alpha}}{M_{2}^{\alpha}(G)},
\end{eqnarray*}
with first equality if and only if $G$ is regular, second equality if and only if $d_{u}d_{v}$ is a constant for any $uv\in E_{G}$.
Thus $SDD(G)\geq\frac{2\delta^{2}m^{\frac{\alpha+1}{\alpha}}}{(M_{2}^{\alpha}(G))^{\frac{1}{\alpha}}}$
with equality if and only if $G$ is regular.

By Lemma \ref{l2-3}, $M_{2}^{\alpha}(G)\leq  \frac{\Delta^{\alpha}}{2}M_{1}^{\alpha+1}(G)$ with equality if and only if $G$ is regular.
Thus
$SDD(G)\geq \frac{\delta^{2}(2m)^{\frac{\alpha+1}{\alpha}}}{\Delta(M_{1}^{\alpha}(G))^{\frac{1}{\alpha}}}
$
with equality if and only if $G$ is regular.
\end{proof}

$F(G)=\sum\limits_{uv\in E_{G}}(d_{u}^{2}+d_{v}^{2})$ is called the forgotten index \cite{fugu2015}.
\begin{theorem}\label{t3-7}
Let $G$ be a graph with $|E_{G}|=m$. Then
$$ SDD(G)\geq \frac{2m^{2}}{M_{2}(G)},\ \
SDD(G)\geq \frac{4m^{2}}{F(G)}
$$
with both equalities if and only if $G\cong K_{2}$.
\end{theorem}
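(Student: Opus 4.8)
The plan is to derive both inequalities from a single tool, the Cauchy--Schwarz inequality, applied to a well-chosen splitting of the summand of $SDD(G)=\sum_{uv\in E_{G}}\frac{d_{u}^{2}+d_{v}^{2}}{d_{u}d_{v}}$, and then to finish each with an elementary pointwise estimate using $d_{u},d_{v}\geq 1$. Writing $SDD$ in this fused form is what makes both $M_{2}(G)=\sum d_{u}d_{v}$ and $F(G)=\sum(d_{u}^{2}+d_{v}^{2})$ appear naturally as the second factor in a Cauchy--Schwarz product.

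For the first bound I would write each edge term as $\frac{d_{u}^{2}+d_{v}^{2}}{d_{u}d_{v}}=\bigl(\sqrt{\tfrac{d_{u}^{2}+d_{v}^{2}}{d_{u}d_{v}}}\bigr)^{2}$ and pair it against $d_{u}d_{v}=\bigl(\sqrt{d_{u}d_{v}}\bigr)^{2}$. Cauchy--Schwarz then gives
\[
SDD(G)\cdot M_{2}(G)=\Bigl(\sum_{uv\in E_{G}}\tfrac{d_{u}^{2}+d_{v}^{2}}{d_{u}d_{v}}\Bigr)\Bigl(\sum_{uv\in E_{G}}d_{u}d_{v}\Bigr)\geq\Bigl(\sum_{uv\in E_{G}}\sqrt{d_{u}^{2}+d_{v}^{2}}\Bigr)^{2}.
\]
Since $d_{u}^{2}+d_{v}^{2}\geq 2d_{u}d_{v}\geq 2$ on every edge, each radicand satisfies $\sqrt{d_{u}^{2}+d_{v}^{2}}\geq\sqrt{2}$, so the right-hand side is at least $(\sqrt{2}\,m)^{2}=2m^{2}$. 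Dividing by $M_{2}(G)>0$ yields $SDD(G)\geq\frac{2m^{2}}{M_{2}(G)}$.

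For the second bound I would use the same scheme but pair $\sqrt{\tfrac{d_{u}^{2}+d_{v}^{2}}{d_{u}d_{v}}}$ against $\sqrt{d_{u}^{2}+d_{v}^{2}}$, so that Cauchy--Schwarz produces
\[
SDD(G)\cdot F(G)\geq\Bigl(\sum_{uv\in E_{G}}\tfrac{d_{u}^{2}+d_{v}^{2}}{\sqrt{d_{u}d_{v}}}\Bigr)^{2}.
\]
Here I estimate each term by $\frac{d_{u}^{2}+d_{v}^{2}}{\sqrt{d_{u}d_{v}}}\geq\frac{2d_{u}d_{v}}{\sqrt{d_{u}d_{v}}}=2\sqrt{d_{u}d_{v}}\geq 2$, again using $d_{u},d_{v}\geq 1$; the inner sum is therefore at least $2m$, and $SDD(G)\geq\frac{4m^{2}}{F(G)}$ follows.

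The equality discussion is the only part needing care, but it is forced by the pointwise estimates rather than by the Cauchy--Schwarz step. In each chain, equality throughout requires the elementary bound to be tight on every edge: $d_{u}^{2}+d_{v}^{2}=2$ in the first case, and $d_{u}^{2}+d_{v}^{2}=2d_{u}d_{v}$ together with $d_{u}d_{v}=1$ in the second. For positive integer degrees both conditions force $d_{u}=d_{v}=1$ for all $uv\in E_{G}$, and the only connected nontrivial graph with every degree equal to $1$ is $K_{2}$; conversely one checks $SDD(K_{2})=2=\frac{2m^{2}}{M_{2}(K_{2})}=\frac{4m^{2}}{F(K_{2})}$. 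I expect no genuine obstacle beyond selecting the two pairings so that the resulting cross terms collapse to quantities bounded below by $\sqrt{2}\,m$ and $2m$, respectively.
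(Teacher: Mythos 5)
Your proof is correct, and it is essentially the paper's argument: both rest on the Cauchy--Schwarz inequality over the edge set combined with the pointwise observation that $d_{u}^{2}+d_{v}^{2}\geq 2d_{u}d_{v}\geq 2$ on every edge, which also drives the equality characterization down to $K_{2}$. The only difference is cosmetic: the paper applies Cauchy--Schwarz once to $m=\sum_{uv}\bigl(\tfrac{d_{u}d_{v}}{d_{u}^{2}+d_{v}^{2}}\bigr)^{1/2}\bigl(\tfrac{d_{u}^{2}+d_{v}^{2}}{d_{u}d_{v}}\bigr)^{1/2}$ and then bounds $\sum_{uv}\tfrac{d_{u}d_{v}}{d_{u}^{2}+d_{v}^{2}}$ above by $\tfrac{1}{2}M_{2}(G)$ and $\tfrac{1}{4}F(G)$, whereas you choose two pairings that make $M_{2}(G)$ and $F(G)$ appear directly and bound the cross terms below.
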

\begin{proof}
Since
\begin{eqnarray*}
m & = & \sum_{uv\in E_{G}} \left( \frac{d_{u}d_{v}}{d_{u}^{2}+d_{v}^{2}} \right)^{\frac{1}{2}}
\left( \frac{d_{u}^{2}+d_{v}^{2}}{d_{u}d_{v}} \right)^{\frac{1}{2}} \\
& \leq & \left( \sum_{uv\in E_{G}}\frac{d_{u}d_{v}}{d_{u}^{2}+d_{v}^{2}} \right)^{\frac{1}{2}}
\left( \sum_{uv\in E_{G}}\frac{d_{u}^{2}+d_{v}^{2}}{d_{u}d_{v}} \right)^{\frac{1}{2}},
\end{eqnarray*}
with equality if and only if $\frac{d_{u}^{2}+d_{v}^{2}}{d_{u}d_{v}}$ is a constant for any $uv\in E_{G}$.

Since $d_{u}\geq 1$ for any $u\in V_{G}$, then
$\sum\limits_{uv\in E_{G}}\frac{d_{u}d_{v}}{d_{u}^{2}+d_{v}^{2}}\leq \frac{1}{2}\sum\limits_{uv\in E_{G}}d_{u}d_{v}=\frac{1}{2}M_{2}(G)$, with equality if and only if $d_{u}=d_{v}=1$ for any $uv\in E_{G}$.
Thus $SDD(G)\geq \frac{2m^{2}}{M_{2}(G)}$ with equality if and only if $G\cong K_{2}$.

Since $d_{u}\geq 1$ for any $u\in V_{G}$, then
$\frac{d_{u}d_{v}}{d_{u}^{2}+d_{v}^{2}}\leq \frac{d_{u}^{2}+d_{v}^{2}}{4}$, with equality if and only if $d_{u}=d_{v}=1$ for any $uv\in E_{G}$.
Then $\sum\limits_{uv\in E_{G}}\frac{d_{u}d_{v}}{d_{u}^{2}+d_{v}^{2}}\leq \frac{1}{4}\sum\limits_{uv\in E_{G}}d_{u}^{2}+d_{v}^{2}=\frac{1}{4}F(G)$.
Thus $SDD(G)\geq \frac{4m^{2}}{F(G)}$ with equality if and only if $G\cong K_{2}$.
\end{proof}

In the following, we consider the connected graphs with minimal SDD index.

\begin{theorem}\label{t3-8}
Let $G\in \mathcal{G}_{n,m}$. Then

$(i)$ $SDD(G)\geq 2$, with equality if and only if $G\cong K_{2}$;

$(ii)$ There is no such graphs with $2<SDD(G)\leq 4$;

$(iii)$ If $4<SDD(G)\leq 6$, then $G\in \{S_{3},C_{3}\}$ with $SDD(S_{3})=5$ and $SDD(C_{3})=6$;

$(iv)$ If $6<SDD(G)\leq 8$, then $G\in \{P_{4},C_{4}\}$ with $SDD(P_{4})=7$ and $SDD(C_{4})=8$.
\end{theorem}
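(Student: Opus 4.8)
The plan is to exploit the lower bound $SDD(G)\geq 2m$ from Theorem~\ref{t3-1}, which turns each prescribed upper bound on $SDD(G)$ into an upper bound on the size $m$ and thereby reduces every part to a finite search. Since $G$ is connected and nontrivial we have $m\geq 1$, so $SDD(G)\geq 2m\geq 2$; equality throughout forces $m=1$ together with $G$ regular, i.e.\ $G\cong K_2$, which settles (i). For the remaining parts the reduction is simply that $SDD(G)\leq 2k$ implies $2m\leq SDD(G)\leq 2k$, hence $m\leq k$; so (ii), (iii) and (iv) require inspecting all connected graphs with $m\leq 2$, $m\leq 3$ and $m\leq 4$ respectively.

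Next I would enumerate these graphs and record their SDD values directly from the definition. The only connected graph with $m=1$ is $K_2$, with $SDD=2$; with $m=2$ it is $P_3=S_3$, with $SDD=5$; with $m=3$ they are $C_3$ ($SDD=6$), $P_4$ ($SDD=7$) and $S_4$ ($SDD=10$); and with $m=4$ they are the three trees on five vertices---$P_5$ ($SDD=9$), the star $S_5$ ($SDD=17$) and the spider obtained by subdividing one edge of $S_4$ ($SDD=\frac{34}{3}$)---together with the two unicyclic graphs on four vertices, namely $C_4$ ($SDD=8$) and the paw ($SDD=\frac{29}{3}$).

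Reading off this list then settles the remaining claims. No graph with $m\leq 2$ has $SDD$ lying in $(2,4]$, giving (ii); among those with $m\leq 3$ only $S_3$ and $C_3$ have $SDD$ in $(4,6]$, giving (iii); and among those with $m\leq 4$ only $P_4$ and $C_4$ have $SDD$ in $(6,8]$, giving (iv).

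The main obstacle is not analytic but combinatorial: guaranteeing completeness of the enumeration. Here I would use that a connected graph with $m$ edges has at most $m+1$ vertices---with equality exactly for trees---and cyclomatic number $m-n+1$, which for $m\leq 4$ pins down the admissible orders and forces the candidate list above. The only real care needed is to avoid omitting an isomorphism type (for instance the paw alongside $C_4$, or the spider alongside $P_5$ and $S_5$), after which the SDD evaluations are routine.
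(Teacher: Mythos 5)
Your proposal is correct and follows essentially the same route as the paper: both use the bound $SDD(G)\geq 2m$ from Theorem~\ref{t3-1} to reduce each part to a finite check over connected graphs with small size, and your candidate lists (including the paw $C_3^*$ and the spider $P_4^*$) and SDD values agree with the paper's. The only cosmetic difference is that you organize the enumeration directly by $m$, while the paper first bounds $n$ via $2(n-1)\leq 2m$ and then lists graphs by order.
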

\begin{proof}
$(i)$ By Theorem \ref{t3-1}, $SDD(G)\geq 2m\geq 2$, with equality if and only if $G\cong K_{2}$.

Suppose that $n\geq 3$, then we have
$(ii)$ If $2<SDD(G)\leq 4$, then $4\leq 2(n-1)\leq 2m\leq SDD(G)\leq 4$, then $n=3$ and $m=2$.
Thus $G\cong S_{3}$, while $SDD(S_{3})=5>4$, which is a contradiction.

$(iii)$ If $4<SDD(G)\leq 6$, then $4\leq 2(n-1)\leq 2m\leq SDD(G)\leq 6$, then $n=3$ or $4$ and $m\leq 3$. Thus $G\in \{S_{3}, S_{4},P_{4},C_{3}\}$, while $SDD(S_{3})=5$, $SDD(S_{4})=10>6$, $SDD(P_{4})=7>6$, $SDD(C_{3})=6$. Thus $G\in \{S_{3},C_{3}\}$.

$(iv)$ If $6<SDD(G)\leq 8$, then $4\leq 2(n-1)\leq 2m\leq SDD(G)\leq 8$, then $n=3$ or $4$ or $5$ and $m\leq 4$. If $n=3$ and $m\leq 4$, then $G\in \{S_{3},C_{3}\}$ which is a contradiction with $SDD(G)\leq 8$. If $n=4$ and $m\leq 4$, then $G\in \{S_{4},C_{4},P_{4},C_{3}^{*}\}$, where $C_{3}^{*}$ is the graph obtained from $C_{3}$ by adding a pendent vertex to one vertex of $C_{3}$. $SDD(S_{4})=10>8$, $SDD(P_{4})=7$, $SDD(C_{4})=8$, $SDD(C_{3}^{*})=9+\frac{2}{3}>8$. Thus $G\in \{P_{4},C_{4}\}$ in this case.

If $n=5$ and $m\leq 4$, since $m\geq n-1=4$, thus $m=4$.
then $G\in \{P_{5},P_{4}^{*},S_{5}\}$, where $P_{4}^{*}$ is the graph obtained from $P_{4}$ by adding a pendent vertex to one vertex with degree two of $P_{4}$. $SDD(P_{5})=9>8$, $SDD(P_{4}^{*})=11+\frac{1}{3}>8$, $SDD(S_{5})=17>8$. Thus $G=\emptyset$ in this case.
\end{proof}

The inverse problem for the SDD index is also interesting, thus we propose the following problem.
\begin{problem}\label{p3-1}
Solve the inverse problem for the SDD index of graphs or chemical graphs.
\end{problem}

We call $u_{0}v_{0}\in E_{G}$ is a minimal edge in $G$ if $d_{u_{0}}\leq d_{u}$ for all $u\in N_{G}(u_{0})\setminus \{v_{0}\}$ and $d_{v_{0}}\leq d_{u}$ for all $u\in N_{G}(v_{0})\setminus \{u_{0}\}$.

\begin{theorem}\label{t3-9}
Let $G$ be a graph with a minimal edge $u_{0}v_{0}$. Let $G^{*}=G-u_{0}v_{0}$. Then
$$ SDD(G^{*})>SDD(G)-\frac{(d_{u_{0}})^{2}+(d_{v_{0}})^{2}}{d_{u_{0}}d_{v_{0}}}.$$
\end{theorem}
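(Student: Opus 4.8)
The plan is to compute the difference $SDD(G)-SDD(G^{*})$ directly. Deleting the edge $u_{0}v_{0}$ lowers $d_{u_{0}}$ and $d_{v_{0}}$ each by one and leaves every other degree unchanged, so only three groups of edges contribute to the change: the deleted edge $u_{0}v_{0}$ itself, the edges joining $u_{0}$ to its remaining neighbours, and the edges joining $v_{0}$ to its remaining neighbours. Writing $a:=d_{u_{0}}$ and $b:=d_{v_{0}}$, the deleted edge accounts for exactly $\frac{a^{2}+b^{2}}{ab}=\frac{d_{u_{0}}^{2}+d_{v_{0}}^{2}}{d_{u_{0}}d_{v_{0}}}$, which is precisely the quantity subtracted in the statement. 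Hence the claim reduces to showing that the remaining contributions are strictly negative.

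Next I would isolate a single edge $u_{0}w$ with $w\in N_{G}(u_{0})\setminus\{v_{0}\}$. Its contribution to $SDD(G)-SDD(G^{*})$ is
\begin{equation*}
\Big(\frac{a}{d_{w}}+\frac{d_{w}}{a}\Big)-\Big(\frac{a-1}{d_{w}}+\frac{d_{w}}{a-1}\Big)=\frac{1}{d_{w}}-\frac{d_{w}}{a(a-1)}.
\end{equation*}
Note that whenever this neighbour set is nonempty, $u_{0}$ is adjacent to both $v_{0}$ and $w$, so $a\ge 2$ and no division by zero occurs. The displayed quantity is negative exactly when $d_{w}^{2}>a(a-1)$. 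Here the minimality of $u_{0}v_{0}$ enters decisively: it forces $d_{w}\ge d_{u_{0}}=a$, whence $d_{w}^{2}\ge a^{2}=a(a-1)+a>a(a-1)$, and the term is strictly negative. The identical computation with $a$ replaced by $b$ over the neighbours of $v_{0}$ disposes of the third group.

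Combining the three groups, every summand in the two neighbour sums is negative, so $SDD(G)-SDD(G^{*})<\frac{d_{u_{0}}^{2}+d_{v_{0}}^{2}}{d_{u_{0}}d_{v_{0}}}$, which rearranges to the asserted bound. The one point requiring attention is the strictness: if both neighbour sets were empty then $G\cong K_{2}$ and the sums vanish, giving equality rather than a strict inequality. Since we work with connected nontrivial graphs (with at least three vertices in the intended application), connectivity guarantees that at least one endpoint of $u_{0}v_{0}$ has a further neighbour, so at least one genuinely negative summand is present and the inequality is strict. I expect the bookkeeping of which edges change degree, together with the clean use of the minimality hypothesis to force $d_{w}\ge a$ at each incident edge, to be the crux of the argument; the accompanying algebra is routine.
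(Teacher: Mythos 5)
Your proof is correct and follows essentially the same route as the paper's: the same three-way decomposition of $SDD(G)-SDD(G^{*})$ into the deleted edge (contributing exactly $\frac{d_{u_{0}}^{2}+d_{v_{0}}^{2}}{d_{u_{0}}d_{v_{0}}}$), the unaffected edges, and the remaining edges incident to $u_{0}$ or $v_{0}$, each of whose contributions is shown to be strictly negative via the minimality hypothesis $d_{w}\geq d_{u_{0}}$ (your simplified per-edge form $\frac{1}{d_{w}}-\frac{d_{w}}{a(a-1)}$ is the same quantity the paper cross-multiplies, and your sign analysis agrees). Your explicit handling of the degenerate case where both neighbour sets are empty (i.e.\ $G\cong K_{2}$, for which the inequality degenerates to equality) is a point the paper's proof passes over silently, and is worth retaining.
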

\begin{proof}
Since $G^{*}=G-u_{0}v_{0}$, then $V_{G}=V_{G^{*}}$.
Let $d_{u}\in V_{G}$ and $d_{u}^{*}\in V_{G^{*}}$, then
$d_{u_{0}}^{*}=d_{u_{0}}-1$, $d_{v_{0}}^{*}=d_{v_{0}}-1$ and $d_{u}^{*}=d_{u}$ for all $u\in V_{G}\setminus \{u_{0},v_{0}\}$.

Let $E_{G}\supseteq E_{0}=\{uv\in E_{G}|u\notin\{u_{0},v_{0}\},v\notin\{u_{0},v_{0}\}\}$. Then
\begin{eqnarray*}
& & SDD(G)-SDD(G^{*}) \\
& = & \sum_{uv\in E_{0}} \left(\frac{d_{u}^{2}+d_{v}^{2}}{d_{u}d_{v}}-\frac{(d_{u}^{*})^{2}
+(d_{v}^{*})^{2}}{d_{u}^{*}d_{v}^{*}}\right)+ \sum_{u\in N_{G^{*}}(u_{0})} \left(\frac{d_{u_{0}}^{2}+d_{u}^{2}}{d_{u_{0}}d_{u}}-\frac{(d_{u_{0}}^{*})^{2}
+(d_{u}^{*})^{2}}{d_{u_{0}}^{*}d_{u}^{*}}\right)\\
& \quad &  +\sum_{u\in N_{G^{*}}(v_{0})} \left(\frac{d_{v_{0}}^{2}+d_{u}^{2}}{d_{v_{0}}d_{u}}-\frac{(d_{v_{0}}^{*})^{2}
+(d_{u}^{*})^{2}}{d_{v_{0}}^{*}d_{u}^{*}}\right)+ \frac{d_{u_{0}}^{2}+d_{v_{0}}^{2}}{d_{u_{0}}d_{v_{0}}}\\
& = & \sum_{u\in N_{G^{*}}(u_{0})} \left(\frac{d_{u_{0}}^{2}+d_{u}^{2}}{d_{u_{0}}d_{u}}-\frac{(d_{u_{0}}^{*})^{2}
+(d_{u}^{*})^{2}}{d_{u_{0}}^{*}d_{u}^{*}} \right)+\sum_{u\in N_{G^{*}}(v_{0})} \left( \frac{d_{v_{0}}^{2}+d_{u}^{2}}{d_{v_{0}}d_{u}}-\frac{(d_{v_{0}}^{*})^{2}
+(d_{u}^{*})^{2}}{d_{v_{0}}^{*}d_{u}^{*}} \right)\\
& \quad & + \frac{d_{u_{0}}^{2}+d_{v_{0}}^{2}}{d_{u_{0}}d_{v_{0}}}.
\end{eqnarray*}

Since $u_{0}v_{0}$ is a minimal edge in $G$, then $1\leq d_{u_{0}}\leq d_{u}$ for all $u\in N_{G}(u_{0})\setminus \{v_{0}\}$.
Since $d_{u}(d_{u_{0}}-1)(d_{u_{0}}^{2}+d_{u}^{2})-d_{u}d_{u_{0}}((d_{u_{0}}-1)^{2}+d_{v}^{2})
=d_{u}(d_{u_{0}}^{2}-d_{u}^{2}-1)<0$, then $\frac{d_{u_{0}}^{2}+d_{u}^{2}}{d_{u_{0}}d_{u}}-\frac{(d_{u_{0}}^{*})^{2}
+(d_{u}^{*})^{2}}{d_{u_{0}}^{*}d_{u}^{*}}<0$. Similarly, we also have $\frac{d_{v_{0}}^{2}+d_{u}^{2}}{d_{v_{0}}d_{u}}-\frac{(d_{v_{0}}^{*})^{2}
+(d_{u}^{*})^{2}}{d_{v_{0}}^{*}d_{u}^{*}}<0$.
Thus we have $SDD(G^{*})>SDD(G)-\frac{(d_{u_{0}})^{2}+(d_{v_{0}})^{2}}{d_{u_{0}}d_{v_{0}}}$.
\end{proof}

\section{Sharp bounds for the SDD index of line graphs}

It is obvious that $SDD(\mathcal{L}(G))=0$ if and only if $G$ is a trivial graph, i.e., $G\cong K_{2}$. Thus in the following, we suppose $G\ncong K_{2}$.

\begin{theorem}\label{t4-1}
Let $G$ be a graph with $|E_{G}|=m$. Then

$(i)$ If $G\ncong P_{m+1}$, then $SDD(\mathcal{L}(G))\geq 2m$, with equality if and only if $G\in \{S_{4},C_{m}\}$;

$(ii)$ If $G\ncong K_{2}$, then $SDD(\mathcal{L}(G))\leq (\frac{1}{2}M_{1}(G)-m_{G})(m_{G}-1+\frac{1}{m_{G}-1})$, with equality if and only if $G\in \{P_{3},P_{4}\}$.
\end{theorem}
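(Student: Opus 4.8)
The plan is to reduce both parts to Theorem~\ref{t3-1} applied to the line graph $\mathcal{L}(G)$, whose order is $n_{\mathcal{L}(G)}=m_{G}=m$ and whose size is $m_{\mathcal{L}(G)}=\frac{1}{2}M_{1}(G)-m_{G}$ by Lemma~\ref{l2-5}. For part $(i)$, I would first apply the lower bound of Theorem~\ref{t3-1} to $\mathcal{L}(G)$ to get $SDD(\mathcal{L}(G))\geq 2m_{\mathcal{L}(G)}$, with equality if and only if $\mathcal{L}(G)$ is regular. Since $G\ncong P_{m+1}$, Lemma~\ref{l2-4} gives $m_{\mathcal{L}(G)}\geq m_{G}=m$, so chaining the two inequalities yields $SDD(\mathcal{L}(G))\geq 2m_{\mathcal{L}(G)}\geq 2m$. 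Equality in the full chain therefore requires simultaneously that $\mathcal{L}(G)$ be regular and that $m_{\mathcal{L}(G)}=m_{G}$.

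The equality analysis is the substantive part of $(i)$. By Lemma~\ref{l2-5}, the condition $m_{\mathcal{L}(G)}=m_{G}$ is equivalent to $M_{1}(G)=4m$, while by Fact~\ref{f2-7} the regularity of $\mathcal{L}(G)$ means $G$ is regular or biregular. I would treat these two cases separately. If $G$ is $r$-regular, then $M_{1}(G)=nr^{2}$ and $m=\frac{nr}{2}$, so $M_{1}(G)=4m$ forces $r=2$, i.e. $G\cong C_{m}$; one checks $\mathcal{L}(C_{m})\cong C_{m}$ is $2$-regular with $m$ edges, giving $SDD=2m$. If $G$ is $(\Delta,\delta)$-biregular with $\Delta>\delta$, counting edges from each side of the bipartition gives $M_{1}(G)=m(\Delta+\delta)$, so the condition $M_{1}(G)=4m$ reduces to $\Delta+\delta=4$, hence $(\Delta,\delta)=(3,1)$; connectedness of such a graph forces a single degree-$3$ vertex joined to three leaves, i.e. $G\cong S_{4}$, and indeed $\mathcal{L}(S_{4})\cong C_{3}$ gives $SDD=6=2m$. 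This yields exactly $G\in\{S_{4},C_{m}\}$.

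For part $(ii)$, the key observation is that the claimed upper bound is precisely the upper bound of Theorem~\ref{t3-1} applied to $\mathcal{L}(G)$: substituting $n_{\mathcal{L}(G)}=m$ and $m_{\mathcal{L}(G)}=\frac{1}{2}M_{1}(G)-m_{G}$ into $m_{\mathcal{L}(G)}\bigl(n_{\mathcal{L}(G)}-1+\frac{1}{n_{\mathcal{L}(G)}-1}\bigr)$ gives exactly $(\frac{1}{2}M_{1}(G)-m_{G})(m-1+\frac{1}{m-1})$. Since $G\ncong K_{2}$ guarantees $\mathcal{L}(G)$ is connected, nontrivial, and of order $m\geq 2$ (so the denominator $m-1$ is nonzero), Theorem~\ref{t3-1} applies and gives the bound, with equality if and only if $\mathcal{L}(G)\cong S_{m}$. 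Fact~\ref{f2-6}$(i)$--$(iii)$ then characterizes when the line graph is a star: $\mathcal{L}(G)\cong S_{2}$ forces $G\cong P_{3}$, $\mathcal{L}(G)\cong S_{3}$ forces $G\cong P_{4}$, and no graph has $\mathcal{L}(G)\cong S_{n}$ for $n\geq 4$. Hence equality holds if and only if $G\in\{P_{3},P_{4}\}$.

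I expect the main obstacle to lie in the biregular subcase of $(i)$: one must verify both that $M_{1}(G)=4m$ together with biregularity pins down $(\Delta,\delta)=(3,1)$, and that connectedness of a $(3,1)$-biregular graph leaves only $S_{4}$, ruling out larger configurations. The remaining steps are direct applications of the cited results, and part $(ii)$ is essentially a reinterpretation of Theorem~\ref{t3-1} once the identities $n_{\mathcal{L}(G)}=m$ and $m_{\mathcal{L}(G)}=\frac{1}{2}M_{1}(G)-m_{G}$ are made explicit.
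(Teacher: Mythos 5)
Your proposal is correct, and its skeleton coincides with the paper's: both parts are obtained by applying Theorem~\ref{t3-1} to $\mathcal{L}(G)$ together with Lemma~\ref{l2-4} and Lemma~\ref{l2-5}, and part $(ii)$ is handled identically (equality iff $\mathcal{L}(G)\cong S_{m}$, then Fact~\ref{f2-6}$(i)$--$(iii)$). Where you genuinely diverge is the equality analysis in $(i)$. The paper observes that equality forces $\mathcal{L}(G)$ to be a \emph{regular unicyclic} graph, hence $\mathcal{L}(G)\cong C_{m}$, and then inverts the line-graph operation via Fact~\ref{f2-6}$(iv)$--$(v)$ to land on $G\in\{S_{4},C_{m}\}$. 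You instead translate the two equality conditions into ``$\mathcal{L}(G)$ regular and $M_{1}(G)=4m$'', invoke Fact~\ref{f2-7} to split into the regular and biregular cases for $G$ itself, and finish with the degree-sum computations $nr^{2}=2nr$ (forcing $r=2$, i.e.\ $G\cong C_{m}$) and $\Delta+\delta=4$ (forcing $(\Delta,\delta)=(3,1)$ and, by connectedness, $G\cong S_{4}$). Both routes are valid; yours trades the cycle-recognition step for a short arithmetic case analysis and works entirely with invariants of $G$ rather than having to identify which graphs have a prescribed line graph, which is arguably the more robust of the two since it does not lean on the statement of Fact~\ref{f2-6}$(v)$ (note that, as literally written there, that item would also exclude $\mathcal{L}(C_{m})\cong C_{m}$ for $m\geq 4$, whereas your argument recovers $C_{m}$ directly).
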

\begin{proof}
By Lemma \ref{l2-4}, $m_{\mathcal{L}(G)}\geq m_{G}=n_{\mathcal{L}(G)}$ with equality if and only if $\mathcal{L}(G)$ is a unicyclic graph. By Theorem \ref{t3-1}, $SDD(\mathcal{L}(G))\geq 2m_{\mathcal{L}(G)}\geq 2m$, with equality if and only if $\mathcal{L}(G)$ is regular unicyclic graph, i.e., $\mathcal{L}(G)\cong C_{m}$, then $G\in \{S_{4},C_{m}\}$.

By Fact \ref{f2-6}, Theorem \ref{t3-1} and Lemma \ref{l2-5}, we have that if $G\ncong K_{2}$, then $SDD(\mathcal{L}(G))\leq (\frac{1}{2}M_{1}(G)-m_{G})(m_{G}-1+\frac{1}{m_{G}-1})$, with equality if and only if $G\in \{P_{3},P_{4}\}$.
\end{proof}

Combine Theorem \ref{t3-3} and Fact \ref{f2-6}, we have
\begin{theorem}\label{t4-2}
Let $G$ be a graph with maximum degree $\Delta$ and minimum degree $\delta$. If $G\ncong K_{2}$ , then
$$\max\{4(\delta-1)^{2},1\}\cdot ID(\mathcal{L}(G))\leq SDD(\mathcal{L}(G))\leq 4(\Delta-1)^{2}\cdot ID(\mathcal{L}(G)),$$
with left equality if and only if $G$ is regular or $G\cong S_{3}$, right equality if and only if $G$ is regular.
\end{theorem}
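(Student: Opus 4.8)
The plan is to apply Theorem~\ref{t3-3} to the line graph $\mathcal{L}(G)$ itself and then feed in the degree estimates of Fact~\ref{f2-8}. First I would note that since $G$ is connected and $G\ncong K_{2}$, it has at least two adjacent edges, so $\mathcal{L}(G)$ is a connected nontrivial graph and Theorem~\ref{t3-3} applies to it, giving $\delta_{\mathcal{L}(G)}^{2}\cdot ID(\mathcal{L}(G))\leq SDD(\mathcal{L}(G))\leq \Delta_{\mathcal{L}(G)}^{2}\cdot ID(\mathcal{L}(G))$, with either equality holding if and only if $\mathcal{L}(G)$ is regular. Next I would insert the bounds $\max\{2\delta-2,1\}\leq \delta_{\mathcal{L}(G)}$ and $\Delta_{\mathcal{L}(G)}\leq 2\Delta-2$ from Fact~\ref{f2-8}. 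Squaring and using $(\max\{2\delta-2,1\})^{2}=\max\{4(\delta-1)^{2},1\}$ and $(2\Delta-2)^{2}=4(\Delta-1)^{2}$, together with $ID(\mathcal{L}(G))>0$, turns the two outer inequalities into the claimed bounds.

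For the equality discussion each side splits into two inequalities that must be simultaneously tight. The Theorem~\ref{t3-3} step forces $\mathcal{L}(G)$ to be regular, so by Fact~\ref{f2-7} the graph $G$ is regular or biregular; in the regular case every edge of $\mathcal{L}(G)$ has degree $2\Delta-2=2\delta-2$, while in the $(\Delta,\delta)$-biregular case every such degree equals $\Delta+\delta-2$. For the right-hand equality the Fact~\ref{f2-8} step additionally demands $\Delta_{\mathcal{L}(G)}=2\Delta-2$; the biregular value $\Delta+\delta-2$ meets this only if $\delta=\Delta$, a contradiction, so right equality holds exactly when $G$ is regular.

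For the left-hand equality I would separate $\delta\geq 2$ from $\delta=1$. When $\delta\geq 2$ the floor in $\max\{4(\delta-1)^{2},1\}$ is inactive, the Fact~\ref{f2-8} step demands $\delta_{\mathcal{L}(G)}=2\delta-2$, and the same comparison rules out the biregular case, leaving $G$ regular. When $\delta=1$ the floor forces $\delta_{\mathcal{L}(G)}=1$, so a regular $\mathcal{L}(G)$ must be $1$-regular, i.e. $\mathcal{L}(G)\cong S_{2}$, whence Fact~\ref{f2-6}$(i)$ gives $G\cong P_{3}\cong S_{3}$. This accounts precisely for the extremal family ``$G$ regular or $G\cong S_{3}$'' on the left.

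The main obstacle I expect is exactly this boundary case $\delta=1$: here the $\max\{\cdot,1\}$ term becomes active, the plain comparison of $\Delta+\delta-2$ with $2\delta-2$ degenerates, and one must instead recover the extremal graph structurally from $\mathcal{L}(G)\cong S_{2}$ via Fact~\ref{f2-6} rather than from the degree equation alone. I would also be careful to record that a regular $G$ with $G\ncong K_{2}$ has common degree at least $2$, so that the regular family genuinely attains both bounds and is not excluded together with $K_{2}$.
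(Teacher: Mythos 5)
Your proposal is correct and follows essentially the same route as the paper: apply Theorem~\ref{t3-3} to $\mathcal{L}(G)$, plug in the degree bounds $\max\{2\delta-2,1\}\leq\delta_{\mathcal{L}(G)}\leq\Delta_{\mathcal{L}(G)}\leq 2\Delta-2$ from Fact~\ref{f2-8}, and resolve the equality cases via Fact~\ref{f2-7} (regular or biregular) and Fact~\ref{f2-6}$(i)$ for the degenerate $\delta=1$ case yielding $G\cong S_{3}$. In fact your write-up supplies the case analysis that the paper compresses into a single sentence, and your handling of the $\delta=1$ boundary and of the biregular exclusion is exactly what is needed.
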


\begin{theorem}\label{t4-3}
Let $G$ be a graph with $|E_{G}|=m$, maximum degree $\Delta$ and minimum degree $\delta$. If $G\ncong K_{2}$ , then
$$M_{1}(G)-2m\leq SDD(\mathcal{L}(G))\leq \frac{1}{2}(M_{1}(G)-2m)\left( \frac{2\Delta-2}{\max\{ 2\delta-2,1\}}+ \frac{\max\{ 2\delta-2,1\}}{2\Delta-2}  \right),$$
with left equality if and only if $G$ is regular or biregular, right equality if and only if $G\cong P_{4}$ or $G$ is regular.
\end{theorem}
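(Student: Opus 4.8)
The plan is to reduce both bounds to the generic $SDD$ estimates already proved, applied to the line graph $\mathcal{L}(G)$, using Lemma \ref{l2-5} to rewrite the quantities in terms of $G$ and Fact \ref{f2-8} to control the degrees of $\mathcal{L}(G)$. By Lemma \ref{l2-5}, $m_{\mathcal{L}(G)}=\tfrac12 M_1(G)-m$, so that $M_1(G)-2m=2m_{\mathcal{L}(G)}$ and $\tfrac12(M_1(G)-2m)=m_{\mathcal{L}(G)}$; hence the two targets are exactly $2m_{\mathcal{L}(G)}$ and $m_{\mathcal{L}(G)}\bigl(\tfrac{b}{a}+\tfrac{a}{b}\bigr)$, where $a=\max\{2\delta-2,1\}$ and $b=2\Delta-2$.

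For the lower bound I would apply Theorem \ref{t3-1} to $\mathcal{L}(G)$, giving $SDD(\mathcal{L}(G))\ge 2m_{\mathcal{L}(G)}=M_1(G)-2m$ with equality iff $\mathcal{L}(G)$ is regular; by Fact \ref{f2-7} this holds iff $G$ is regular or biregular, which is the claimed condition. For the upper bound, Fact \ref{f2-8} gives $a\le d_{\mathcal{L}(G)}(e)\le b$ for every vertex $e=uv$ of $\mathcal{L}(G)$, where $d_{\mathcal{L}(G)}(e)=d_u+d_v-2$. Applying Lemma \ref{l2-2} to each edge $ef$ of $\mathcal{L}(G)$ bounds its contribution by $\tfrac{b}{a}+\tfrac{a}{b}$, and summing over the $m_{\mathcal{L}(G)}$ edges yields the stated bound (equivalently, one applies Theorem \ref{t3-4} to $\mathcal{L}(G)$ and then bounds $\tfrac{\Delta_{\mathcal{L}(G)}}{\delta_{\mathcal{L}(G)}}+\tfrac{\delta_{\mathcal{L}(G)}}{\Delta_{\mathcal{L}(G)}}$ by Lemma \ref{l2-2}).

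The main obstacle is the equality characterization for the upper bound. By Lemma \ref{l2-2}, equality forces every edge $ef$ of $\mathcal{L}(G)$ to satisfy $\{d_{\mathcal{L}(G)}(e),d_{\mathcal{L}(G)}(f)\}=\{a,b\}$, except in the degenerate case $a=b$. If $a=b$, then $\max\{2\delta-2,1\}=2\Delta-2$ forces $\delta=\Delta\ge2$, i.e.\ $G$ regular, and the bound collapses to $2m_{\mathcal{L}(G)}$. If $a<b$ I would translate the condition through $d_{\mathcal{L}(G)}(uv)=d_u+d_v-2$: a line-degree $b$ needs both ends of the $G$-edge equal to $\Delta$, while a line-degree $a$ needs both ends equal to $\delta$ (when $\delta\ge2$) or $\{d_u,d_v\}=\{1,2\}$ (when $\delta=1$). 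Since $G\ncong K_2$ is connected, some pair of $\mathcal{L}(G)$-adjacent $G$-edges exists; such a pair shares a vertex $w$ and must carry the two distinct values $a,b$, over-determining $d_w$.

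When $\delta\ge2$ this forces $d_w=\delta$ and $d_w=\Delta$ simultaneously, impossible, so no equality occurs. When $\delta=1$ the shared vertex cannot be a leaf, so $d_w=2$ and at the same time $d_w=\Delta$, forcing $\Delta=2$; then $G$ has maximum degree $2$ together with a leaf, hence $G\cong P_n$ for some $n\ge3$. Since $\mathcal{L}(P_n)\cong P_{n-1}$ by Fact \ref{f2-6}$(vi)$, the requirement that every edge join degrees $1$ and $2$ singles out $P_3\cong\mathcal{L}(P_4)$, i.e.\ $G\cong P_4$. Combined with the regular case this gives the stated equality condition. I expect the shared-vertex degree bookkeeping, and isolating exactly $P_4$ among all paths, to be where the care is needed.
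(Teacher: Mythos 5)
Your proposal is correct, and the two bounds themselves are obtained exactly as in the paper: the lower bound from Theorem \ref{t3-1} applied to $\mathcal{L}(G)$ together with Lemma \ref{l2-5} and Fact \ref{f2-7}, the upper bound from Theorem \ref{t3-4} (equivalently Lemma \ref{l2-2} edge by edge) together with Fact \ref{f2-8}. Where you genuinely diverge is in characterizing equality in the upper bound. The paper first checks that $P_4$ and regular graphs attain the bound, then argues globally: equality forces $\mathcal{L}(G)$ to be regular or biregular with $\delta_{\mathcal{L}(G)}=\max\{2\delta-2,1\}$ and $\Delta_{\mathcal{L}(G)}=2\Delta-2$; in the case $\delta=1$ it identifies $\mathcal{L}(G)$ as a star and invokes Fact \ref{f2-6} to recover $G\cong P_4$, and in the case $\delta\geq 2$ it rules out the biregular possibility by a rather terse contradiction argument. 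You instead work locally: equality in Lemma \ref{l2-2} forces every edge of $\mathcal{L}(G)$ to carry line-degrees $\{a,b\}=\{\max\{2\delta-2,1\},\,2\Delta-2\}$, you translate each line-degree back into constraints on the endpoint degrees in $G$ via $d_{\mathcal{L}(G)}(uv)=d_u+d_v-2$, and then the shared vertex of two adjacent edges of $G$ is over-determined, killing the case $\delta\geq 2$ with $a<b$ outright and forcing $\Delta=2$ (hence $G$ a path, hence $G\cong P_4$) when $\delta=1$. Your local argument buys a cleaner and more self-contained equality analysis -- in particular it replaces the paper's somewhat opaque dismissal of the biregular case in Case 2 with a transparent degree contradiction -- while the paper's route leans on the structural classification of line graphs that are stars (Fact \ref{f2-6}). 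Both correctly isolate $P_4$ and the regular graphs.
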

\begin{proof}
By Lemma \ref{l2-5} and Theorem \ref{t3-1}, we have $SDD(\mathcal{L}(G))\geq 2m_{\mathcal{L}(G)}=M_{1}(G)-2m$, with equality if and only if $\mathcal{L}(G)$ is regular, i.e.,
$G$ is regular or biregular.

By Theorem \ref{t3-4},Lemma \ref{l2-2}, Lemma \ref{l2-5} and Fact \ref{f2-8}, we have
\begin{eqnarray*}
SDD(\mathcal{L}(G)) & \leq & m_{\mathcal{L}(G)} \left( \frac{\Delta_{\mathcal{L}(G)}}{\delta_{\mathcal{L}(G)}}+\frac{\delta_{\mathcal{L}(G)}}
{\Delta_{\mathcal{L}(G)}}  \right) \\
& = &  \frac{1}{2}(M_{1}(G)-2m) \left( \frac{\Delta_{\mathcal{L}(G)}}{\delta_{\mathcal{L}(G)}}+\frac{\delta_{\mathcal{L}(G)}}
{\Delta_{\mathcal{L}(G)}}  \right)   \\
& \leq & \frac{1}{2}(M_{1}(G)-2m)\left( \frac{2\Delta-2}{\max\{ 2\delta-2,1\}}+ \frac{\max\{ 2\delta-2,1\}}{2\Delta-2}  \right),
\end{eqnarray*}
with first equality if and only if $\mathcal{L}(G)$ is regular or biregular, second equality if and only if $\delta_{\mathcal{L}(G)}=\max\{ 2\delta-2,1\}$ and $\Delta_{\mathcal{L}(G)}=2\Delta-2$.

If $G\cong P_{4}$ or $G$ is regular, we have the equality holds.
$SDD(\mathcal{L}(P_{4}))=SDD(S_{3})=5=\frac{1}{2}(10-2\times 3)(\frac{2\times2-2}{1}+\frac{1}{2\times2-2})$, and
$SDD(\mathcal{L}(G))=2m_{\mathcal{L}(G)}=M_{1}(G)-2m=\frac{1}{2}(M_{1}(G)-2m)\left( \frac{2\Delta-2}{\max\{ 2\delta-2,1\}}+ \frac{\max\{ 2\delta-2,1\}}{2\Delta-2}  \right)$.

In the following, we suppose that $\mathcal{L}(G)$ is regular or biregular, and $\delta_{\mathcal{L}(G)}=\max\{ 2\delta-2,1\}$ and $\Delta_{\mathcal{L}(G)}=2\Delta-2$.

{\bf Case 1}. $\delta=1$.

Then $\delta_{\mathcal{L}(G)}=\max\{ 2\delta-2,1\}=1$.
Since $G\ncong K_{2}$, then $\Delta\geq 2$ and $\Delta_{\mathcal{L}(G)}=2\Delta-2\geq 2$.
Then $\mathcal{L}(G)$ is a $(\Delta_{\mathcal{L}(G)},1)$-biregular graph.
Thus $\mathcal{L}(G)\cong S_{\Delta_{\mathcal{L}(G)}+1}$ with $n_{\mathcal{L}(G)}=\Delta_{\mathcal{L}(G)}+1\geq 3$.
By Fact \ref{f2-6}, we have $G\cong P_{4}$.

{\bf Case 2}. $\delta\geq 2$.

In this case, $\mathcal{L}(G)$ is regular or biregular, and $2\delta-2= \delta_{\mathcal{L}(G)}\leq \Delta_{\mathcal{L}(G)}=2\Delta-2$.
If $\mathcal{L}(G)$ is biregular, we have $\delta_{\mathcal{L}(G)}< \Delta_{\mathcal{L}(G)}$,
then $\delta<\Delta$, which is a contradiction with the definition of biregular graphs and
$2\delta-2= \delta_{\mathcal{L}(G)}\leq \Delta_{\mathcal{L}(G)}=2\Delta-2$.
Then $\mathcal{L}(G)$ is regular, thus $2\delta-2= \delta_{\mathcal{L}(G)}= \Delta_{\mathcal{L}(G)}=2\Delta-2$. Thus $G$ is regular in this case.
\end{proof}

In the following, we consider the Nordhaus-Gaddum-type results for the SDD index of a graph $G$ and its line graph $\mathcal{L}(G)$.

\begin{corollary}\label{c4-4}
Let $G$ be a graph with maximum degree $\Delta$ and minimum degree $\delta$. If $G\ncong K_{2}$ , then
$$M_{1}(G)\leq SDD(G)+SDD(\mathcal{L}(G))\leq \frac{1}{2}M_{1}(G)\left( \frac{2\Delta-2}{\max\{ 2\delta-2,1\}}+ \frac{\max\{ 2\delta-2,1\}}{2\Delta-2}  \right),$$
with both equalities if and only if $G$ is regular.
\end{corollary}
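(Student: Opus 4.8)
The plan is to obtain both inequalities by simply adding the corresponding one-sided estimates for $SDD(G)$ and for $SDD(\mathcal{L}(G))$ that are already established, and then to reconcile the various equality conditions. Throughout I abbreviate $Q:=\frac{2\Delta-2}{\max\{2\delta-2,1\}}+\frac{\max\{2\delta-2,1\}}{2\Delta-2}$, so that the claimed upper bound reads $\frac{1}{2}M_{1}(G)\,Q$.

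For the lower bound I would combine the lower estimate $SDD(G)\geq 2m$ from Theorem \ref{t3-1} with the lower estimate $SDD(\mathcal{L}(G))\geq M_{1}(G)-2m$ from Theorem \ref{t4-3}. Adding the two cancels the term $2m$ and leaves exactly $SDD(G)+SDD(\mathcal{L}(G))\geq M_{1}(G)$. Equality forces both summands to be tight at once; the first requires $G$ regular (Theorem \ref{t3-1}) and the second requires $G$ regular or biregular (Theorem \ref{t4-3}), so the intersection is precisely the regular graphs.

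For the upper bound, Theorem \ref{t4-3} already supplies $SDD(\mathcal{L}(G))\leq \frac{1}{2}(M_{1}(G)-2m)\,Q$, so since $m+\frac{1}{2}(M_{1}(G)-2m)=\frac{1}{2}M_{1}(G)$ it suffices to prove $SDD(G)\leq mQ$. Theorem \ref{t3-4} gives $SDD(G)\leq m(\frac{\Delta}{\delta}+\frac{\delta}{\Delta})$, so the remaining point is the numerical comparison $\frac{\Delta}{\delta}+\frac{\delta}{\Delta}\leq Q$. Because $g(t)=t+\frac{1}{t}$ is increasing for $t\geq 1$ (Lemma \ref{l2-2}), this reduces to $\frac{\Delta}{\delta}\leq\frac{2\Delta-2}{\max\{2\delta-2,1\}}$, which I would check in the two cases $\delta\geq 2$ (where it is equivalent to $\Delta(\delta-1)\leq\delta(\Delta-1)$, i.e.\ $\delta\leq\Delta$) and $\delta=1$ (where it becomes $\Delta\leq 2\Delta-2$, i.e.\ $\Delta\geq 2$, which holds since $G\ncong K_{2}$).

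The main obstacle is the equality discussion for the upper bound, since the two ingredient theorems carry different extremal families. Equality requires both $SDD(G)=mQ$ and $SDD(\mathcal{L}(G))=\frac{1}{2}(M_{1}(G)-2m)\,Q$. The first demands that $G$ be regular or biregular (Theorem \ref{t3-4}) and that the comparison above be an equality, i.e.\ $\frac{\Delta}{\delta}=\frac{2\Delta-2}{\max\{2\delta-2,1\}}$; the case analysis shows this forces either $G$ regular or $(\delta,\Delta)=(1,2)$, and the latter combined with biregularity forces $G\cong P_{3}$. The second condition, by Theorem \ref{t4-3}, holds only for $G\cong P_{4}$ or $G$ regular. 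Intersecting the two lists, neither $P_{3}$ nor $P_{4}$ survives (for instance $SDD(P_{4})=7<\frac{15}{2}=mQ$, so $P_{4}$ fails the first requirement), leaving only the regular graphs; hence both equalities hold if and only if $G$ is regular.
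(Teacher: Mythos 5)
Your proposal is correct and follows essentially the same route as the paper: add the one-sided bounds from Theorems \ref{t3-1}, \ref{t3-4} and \ref{t4-3}, and reduce the upper bound to the comparison $\frac{\Delta}{\delta}+\frac{\delta}{\Delta}\leq \frac{2\Delta-2}{\max\{2\delta-2,1\}}+\frac{\max\{2\delta-2,1\}}{2\Delta-2}$, split into the cases $\delta=1$ and $\delta\geq 2$. Your reconciliation of the equality conditions (ruling out $P_{3}$ and $P_{4}$ explicitly) is in fact a bit more careful than the paper's.
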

\begin{proof}
Combine Theorem \ref{t3-1} and Theorem \ref{t4-3}, we have $SDD(G)+SDD(\mathcal{L}(G))\geq M_{1}(G)$, with equality if and only if $G$ is regular.

For the upper bound of $SDD(G)+SDD(\mathcal{L}(G))$, we consider the following two cases.

{\bf Case 1}. $\delta=1$.

Then $G$ is not a regular graph $(G\ncong K_{2})$, thus $\Delta\geq 2$.
By Theorem \ref{t3-4} and Theorem \ref{t4-3}, we have
\begin{eqnarray*}
& & SDD(G)+SDD(\mathcal{L}(G)) \\
& < & m \left( \frac{\Delta^{2}+1}{\Delta}  \right)+ \frac{1}{2}(M_{1}(G)-2m)\left( \frac{4(\Delta-1)^{2}+1}{2(\Delta-1)} \right)\\
& = &  \frac{1}{4}M_{1}(G)\left( \frac{4(\Delta-1)^{2}+1}{\Delta-1} \right)+m \left( \frac{\Delta^{2}+1}{\Delta}-\frac{4(\Delta-1)^{2}+1}{2(\Delta-1)} \right) \\
& \leq & \frac{1}{4}M_{1}(G)\left( \frac{4(\Delta-1)^{2}+1}{\Delta-1} \right).
\end{eqnarray*}

{\bf Case 2}. $\delta\geq 2$.

It is easy to proof that $\frac{\Delta^{2}+\delta^{2}}{\Delta\delta}\leq \frac{(\Delta-1)^{2}+(\delta-1)^{2}}{(\Delta-1)(\delta-1)}$. Then
\begin{eqnarray*}
& & SDD(G)+SDD(\mathcal{L}(G)) \\
& \leq & m \left( \frac{\Delta^{2}+\delta^{2}}{\Delta\delta}  \right)+ \frac{1}{2}(M_{1}(G)-2m)\left( \frac{(\Delta-1)^{2}+(\delta-1)^{2}}{(\Delta-1)(\delta-1)} \right)\\
& \leq &  m \left( \frac{(\Delta-1)^{2}+(\delta-1)^{2}}{(\Delta-1)(\delta-1)} \right)+ \frac{1}{2}(M_{1}(G)-2m)\left( \frac{(\Delta-1)^{2}+(\delta-1)^{2}}{(\Delta-1)(\delta-1)} \right) \\
& = & \frac{1}{2}M_{1}(G)\left( \frac{(\Delta-1)^{2}+(\delta-1)^{2}}{(\Delta-1)(\delta-1)} \right),
\end{eqnarray*}
with equality if and only if $G$ is regular.

Thus we have
$SDD(G)+SDD(\mathcal{L}(G))\leq \frac{1}{2}M_{1}(G)\left( \frac{2\Delta-2}{\max\{ 2\delta-2,1\}}+ \frac{\max\{ 2\delta-2,1\}}{2\Delta-2}  \right)$, with equality if and only if $G$ is regular.
\end{proof}

\begin{theorem}\label{t4-5}
Let $G$ be a graph with $|E_{G}|=m$, maximum degree $\Delta$ and minimum degree $\delta$. Then
$$SDD(\mathcal{L}(G))\geq \frac{\Delta \delta^{2}\chi_{\alpha+1}(G)}{(\Delta-1)^{2}(\chi_{\alpha}(G))^{\frac{1}{\alpha}}},$$
with equality if and only if $G$ is regular.
\end{theorem}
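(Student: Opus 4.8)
The plan is to transport the problem to the line graph, re-run the power-mean argument behind Theorem \ref{t3-6} there, and then translate every line-graph quantity back to $G$ by means of Fact \ref{f2-8}. Write $e=uv\in E_{G}=V_{\mathcal{L}(G)}$ and set $D_{e}:=d_{\mathcal{L}(G)}(e)=d_{u}+d_{v}-2$. The first step is to record the two-sided comparison between $D_{e}$ and $d_{u}+d_{v}$: since $2\delta\le d_{u}+d_{v}\le 2\Delta$, a short calculation gives
$$\frac{\delta-1}{\delta}\,(d_{u}+d_{v})\le D_{e}\le\frac{\Delta-1}{\Delta}\,(d_{u}+d_{v}),$$
each inequality being strict unless $d_{u}+d_{v}$ attains the corresponding extreme value $2\delta$ or $2\Delta$. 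These comparisons are what will later turn sums of powers of $D_{e}$ into the sum-connectivity quantities $\chi_{\alpha}(G)=\sum_{uv}(d_{u}+d_{v})^{\alpha}$ and $\chi_{\alpha+1}(G)=\sum_{uv}(d_{u}+d_{v})^{\alpha+1}$.

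Next I would apply the machinery behind Theorem \ref{t3-6} to $H=\mathcal{L}(G)$. Running the AM--GM step followed by the GM--HM (power-mean) step on $SDD(H)=\sum_{ef\in E_{H}}\frac{D_{e}^{2}+D_{f}^{2}}{D_{e}D_{f}}$, together with $\delta_{\mathcal{L}(G)}\le D_{e}\le\Delta_{\mathcal{L}(G)}$ from Fact \ref{f2-8}, produces a lower bound on $SDD(\mathcal{L}(G))$ in terms of $\delta_{\mathcal{L}(G)}$, $\Delta_{\mathcal{L}(G)}$ and a degree-power sum taken over the vertices of $\mathcal{L}(G)$, i.e. over the edges of $G$. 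The point of organising the bookkeeping at the vertex level of $\mathcal{L}(G)$ is that $\sum_{e\in V_{\mathcal{L}(G)}}D_{e}^{\beta}=\sum_{uv\in E_{G}}(d_{u}+d_{v}-2)^{\beta}$ is exactly the kind of sum that the comparison of the first paragraph converts into $\chi_{\beta}(G)$.

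I would then substitute the line-graph degree bounds of Fact \ref{f2-8}, namely $\delta_{\mathcal{L}(G)}\ge\max\{2\delta-2,1\}$ and $\Delta_{\mathcal{L}(G)}\le 2\Delta-2=2(\Delta-1)$, feed in the comparison $\frac{\delta-1}{\delta}(d_{u}+d_{v})\le D_{e}\le\frac{\Delta-1}{\Delta}(d_{u}+d_{v})$, and collect the resulting correction factors into a single constant. The target numerator $\chi_{\alpha+1}(G)$ should emerge from the degree-power sum $\sum_{e}D_{e}^{\alpha+1}$, bounded below via the lower comparison, while the factor $(\chi_{\alpha}(G))^{1/\alpha}$ in the denominator should emerge from the power-mean normalisation, bounded above via the upper comparison; the constant $\frac{\Delta\delta^{2}}{(\Delta-1)^{2}}$ then records how the shift ``$-2$'' in $D_{e}=d_{u}+d_{v}-2$ interacts with the line-graph degree bounds.

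The hard part will be precisely this last bookkeeping: choosing which inequality to invoke in which direction so that $\chi_{\alpha+1}(G)$ lands in the numerator and $(\chi_{\alpha}(G))^{1/\alpha}$ in the denominator, and checking that all the correction factors telescope into exactly $\frac{\Delta\delta^{2}}{(\Delta-1)^{2}}$ without reversing the inequality. For the equality analysis I would trace every step back: the power-mean estimates are tight if and only if $\mathcal{L}(G)$ is regular, which by Fact \ref{f2-7} means $G$ is regular or biregular, whereas the comparisons between $D_{e}$ and $d_{u}+d_{v}$ are simultaneously tight only when every $d_{u}+d_{v}$ equals both $2\delta$ and $2\Delta$. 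These force $\delta=\Delta$, so the biregular case is eliminated and equality holds if and only if $G$ is regular, as claimed.
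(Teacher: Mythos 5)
Your high-level strategy coincides with the paper's: apply the second inequality of Theorem \ref{t3-6} to $\mathcal{L}(G)$, then push the line-graph quantities back to $G$ via $d_{\mathcal{L}(G)}(uv)=d_{u}+d_{v}-2$ and Fact \ref{f2-8}. But the proposal stops exactly where the proof has to happen: you concede that ``the hard part will be precisely this last bookkeeping'' of making $\chi_{\alpha+1}(G)$ land in the numerator with the constant $\frac{\Delta\delta^{2}}{(\Delta-1)^{2}}$, and you never carry it out, so what you have is a plan rather than a proof. Moreover, the route you sketch for the numerator cannot work as described. The second bound of Theorem \ref{t3-6}, applied to $\mathcal{L}(G)$, puts $(2m_{\mathcal{L}(G)})^{\frac{\alpha+1}{\alpha}}=(M_{1}(G)-2m)^{\frac{\alpha+1}{\alpha}}$ in the numerator --- a power of the scalar edge count of the line graph --- not the degree-power sum $\sum_{e}D_{e}^{\alpha+1}$ that you propose to compare with $\chi_{\alpha+1}(G)$. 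The only degree-power sum that bound produces is $M_{1}^{\alpha}(\mathcal{L}(G))=\sum_{uv\in E_{G}}(d_{u}+d_{v}-2)^{\alpha}$, which sits in the denominator and is handled by the upper comparison $d_{u}+d_{v}-2\le\frac{\Delta-1}{\Delta}(d_{u}+d_{v})$. And any attempt to turn $\sum_{e}D_{e}^{\alpha+1}$ into $\chi_{\alpha+1}(G)$ via your lower comparison $D_{e}\ge\frac{\delta-1}{\delta}(d_{u}+d_{v})$ introduces the $\alpha$-dependent factor $\bigl(\frac{\delta-1}{\delta}\bigr)^{\alpha+1}$, which cannot ``telescope'' into the $\alpha$-free constant $\frac{\Delta\delta^{2}}{(\Delta-1)^{2}}$.

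For the record, the paper's proof is the concrete version of your first two steps: for $\delta\ge 2$ it derives
$$SDD(\mathcal{L}(G))\ \ge\ \frac{(\delta_{\mathcal{L}(G)})^{2}(2m_{\mathcal{L}(G)})^{\frac{\alpha+1}{\alpha}}}{\Delta_{\mathcal{L}(G)}\,(M_{1}^{\alpha}(\mathcal{L}(G)))^{\frac{1}{\alpha}}}\ \ge\ \frac{2(\delta-1)^{2}(M_{1}(G)-2m)^{\frac{\alpha+1}{\alpha}}}{(\Delta-1)\,(M_{1}^{\alpha}(\mathcal{L}(G)))^{\frac{1}{\alpha}}},$$
and then invokes the cited estimate $M_{1}^{\alpha}(\mathcal{L}(G))\le\bigl(\frac{\Delta-1}{\Delta}\bigr)^{\alpha}\chi_{\alpha}(G)$ to produce $(\chi_{\alpha}(G))^{\frac{1}{\alpha}}$ in the denominator. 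Note that even this leaves a numerator of the form $(M_{1}(G)-2m)^{\frac{\alpha+1}{\alpha}}$ rather than $\chi_{\alpha+1}(G)$, so the final passage to the stated bound is not transparent in the paper either; the obstruction you ran into is genuine, but your proposal does not overcome it, and a complete argument would have to supply precisely the comparison you leave open.
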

\begin{proof}
It is obvious that the conclusion holds for $\delta=1$. In the following, we consider $\delta\geq 2$.

By Fact \ref{f2-8}, Lemma \ref{l2-5} and Theorem \ref{t3-6}, we have
$$  SDD(\mathcal{L}(G))\geq \frac{(\delta_{\mathcal{L}(G)})^{2}(2m_{\mathcal{L}(G)})^{\frac{\alpha+1}{\alpha}}}
{\Delta_{\mathcal{L}(G)}(M_{1}^{\alpha}(\mathcal{L}(G)))^{\frac{1}{\alpha}}}
\geq \frac{2(\delta-1)^{2}(M_{1}(G)-2m)^{\frac{\alpha+1}{\alpha}}}
{(\Delta-1)(M_{1}^{\alpha}(\mathcal{L}(G)))^{\frac{1}{\alpha}}},  $$
with equality if and only if $\mathcal{L}(G)$ is $(2\Delta-2)$-regular.

Since $M_{1}^{\alpha}(\mathcal{L}(G))\leq (\frac{\Delta-1}{\Delta})^{\alpha}\chi_{\alpha}(G)$ for $\alpha>0$, with equality if and only if $G$ is $\Delta$-regular \cite{ligz2022}.
Thus
$SDD(\mathcal{L}(G))\geq \frac{\Delta \delta^{2}\chi_{\alpha+1}(G)}{(\Delta-1)^{2}(\chi_{\alpha}(G))^{\frac{1}{\alpha}}}$,
with equality if and only if $G$ is regular.
\end{proof}

\begin{theorem}\label{t4-6}
Let $G$ be a graph with $|E_{G}|=m$ and maximum degree $\Delta$. If $G\ncong K_{2}$, then
$$SDD(\mathcal{L}(G))>\frac{\Delta^{3}(M_{1}(G)-2m)^{2}}{(\Delta-1)^{3}\chi_{3}(G)}.$$
\end{theorem}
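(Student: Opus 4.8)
The plan is to apply the forgotten-index lower bound of Theorem~\ref{t3-7} to the line graph $\mathcal{L}(G)$ and then control the denominator. Since $G$ is connected, nontrivial and $G\ncong K_{2}$, the graph $\mathcal{L}(G)$ is itself connected and nontrivial, so Theorem~\ref{t3-7} applies to it and yields
$$SDD(\mathcal{L}(G))\geq \frac{4m_{\mathcal{L}(G)}^{2}}{F(\mathcal{L}(G))},$$
with equality if and only if $\mathcal{L}(G)\cong K_{2}$. By Lemma~\ref{l2-5} we have $m_{\mathcal{L}(G)}=\frac{1}{2}M_{1}(G)-m$, hence $4m_{\mathcal{L}(G)}^{2}=(M_{1}(G)-2m)^{2}$, so the right-hand side becomes $\frac{(M_{1}(G)-2m)^{2}}{F(\mathcal{L}(G))}$.

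Next I would bound $F(\mathcal{L}(G))$ from above. Writing $F(H)=\sum_{u\in V_{H}}d_{H}(u)^{3}=M_{1}^{3}(H)$ and using Fact~\ref{f2-8}, which gives $d_{\mathcal{L}(G)}(e)=d_{u}+d_{v}-2$ for each $e=uv\in E_{G}$, one has $F(\mathcal{L}(G))=\sum_{uv\in E_{G}}(d_{u}+d_{v}-2)^{3}$. Since $d_{u}+d_{v}\leq 2\Delta$ on every edge, $\frac{d_{u}+d_{v}-2}{d_{u}+d_{v}}=1-\frac{2}{d_{u}+d_{v}}\leq \frac{\Delta-1}{\Delta}$, so summing the cubes gives $F(\mathcal{L}(G))\leq \left(\frac{\Delta-1}{\Delta}\right)^{3}\chi_{3}(G)$ (this is the inequality $M_{1}^{\alpha}(\mathcal{L}(G))\leq (\frac{\Delta-1}{\Delta})^{\alpha}\chi_{\alpha}(G)$ already used in Theorem~\ref{t4-5}, taken at $\alpha=3$), with equality if and only if $G$ is $\Delta$-regular. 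Substituting this into the previous display and simplifying produces exactly $SDD(\mathcal{L}(G))\geq \frac{\Delta^{3}(M_{1}(G)-2m)^{2}}{(\Delta-1)^{3}\chi_{3}(G)}$.

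The remaining and main point is to upgrade $\geq$ to the strict inequality $>$, and I expect this compatibility check to be the crux. The two equality conditions cannot hold simultaneously: equality in the forgotten-index step forces $\mathcal{L}(G)\cong K_{2}$, which by Fact~\ref{f2-6}$(i)$ means $G\cong P_{3}$, whereas equality in the degree bound forces $G$ to be $\Delta$-regular; since $P_{3}$ is not regular, at least one of the two inequalities is strict for every admissible $G$, and hence the composed bound is strict. It remains only to confirm that the denominators are meaningful, namely $M_{1}(G)-2m>0$: this holds because $G\ncong K_{2}$ is connected, so $\mathcal{L}(G)$ has at least one edge and $m_{\mathcal{L}(G)}>0$. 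The algebra of combining the two bounds is routine, so the essential content is the strictness argument.
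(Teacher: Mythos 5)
Your proof is correct and follows essentially the same route as the paper: apply Theorem~\ref{t3-7} to $\mathcal{L}(G)$, convert $m_{\mathcal{L}(G)}$ via Lemma~\ref{l2-5}, bound $F(\mathcal{L}(G))$ by $\left(\frac{\Delta-1}{\Delta}\right)^{3}\chi_{3}(G)$ using $d_{\mathcal{L}(G)}(uv)=d_{u}+d_{v}-2$, and obtain strictness from the incompatibility of the two equality cases ($G\cong P_{3}$ versus $G$ regular). Your write-up is in fact cleaner than the paper's, which contains a typo in the displayed bound on $F(\mathcal{L}(G))$.
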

\begin{proof}
Since $\frac{d_{u}+d_{v}-2}{d_{u}+d_{v}}\leq \frac{\Delta-1}{\Delta}$ for any vertices $u,v\in V_{G}$ with maximum $\Delta$, the equality holds if and only if $d_{u}=d_{v}=\Delta$.
Then $d_{\mathcal{L}(G)}(uv)=d_{u}+d_{v}-2\leq (d_{u}+d_{v})\frac{\Delta-1}{\Delta}$.

Since
$F(\mathcal{L}(G))=\sum\limits_{uv\in V_{\mathcal{L}(G)}}(d_{\mathcal{L}(G)}(uv))^{3}
\leq (\frac{\Delta-1}{\Delta})^{3}\sum\limits_{uv\in E_{G}}(d_{u}+d_{v})^{3}=\frac{\Delta^{3}(M_{1}(G)-2m)^{2}}{(\Delta-1)^{3}\chi_{3}(G)}$,
with equality if and only if $G$ is $\Delta$-regular.

Then By Lemma \ref{l2-5} and Theorem \ref{t3-7}, we have
$$ SDD(\mathcal{L}(G))\geq \frac{4(m_{\mathcal{L}(G)})^{2}}{F(\mathcal{L}(G))}
= \frac{(M_{1}(G)-2m)^{2}}{F(\mathcal{L}(G))}\geq \frac{\Delta^{3}(M_{1}(G)-2m)^{2}}{(\Delta-1)^{3}\chi_{3}(G)}, $$
with first equality if and only if $\mathcal{L}(G)\cong P_{2}$, i.e., $G\cong P_{3}$, second equality if and only if $G$ is regular. This is a contradiction.
$SDD(\mathcal{L}(G))>\frac{\Delta^{3}(M_{1}(G)-2m)^{2}}{(\Delta-1)^{3}\chi_{3}(G)}$.
\end{proof}

In the following, we consider the line graphs with minimal SDD index.
Combine the Fact \ref{f2-6} and Theorem \ref{t3-8}, we have the following result.
The proof of \ref{t4-7} is similar to Theorem \ref{t3-8}, we omit it.
\begin{theorem}\label{t4-7}
Let $G$ be a graph with $G\ncong K_{2}$. Then

$(i)$ $SDD(\mathcal{L}(G))\geq 2$, with equality if and only if $G\cong P_{3}$;

$(ii)$ There is no such graphs with $2<SDD(\mathcal{L}(G))\leq 4$;

$(iii)$ If $4<SDD(\mathcal{L}(G))\leq 6$, then $G\in \{P_{4},C_{3},S_{4}\}$ with $SDD(\mathcal{L}(P_{4}))=5$ and $SDD(\mathcal{L}(C_{3}))=SDD(\mathcal{L}(S_{4}))=6$;

$(iv)$ If $6<SDD(\mathcal{L}(G))\leq 8$, then $G\in \{P_{5},C_{4}\}$ with $SDD(\mathcal{L}(P_{5}))=7$ and $SDD(\mathcal{L}(C_{4}))=8$.
\end{theorem}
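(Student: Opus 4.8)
The plan is to exploit the fact that $\mathcal{L}(G)$ is itself a connected nontrivial graph, so that Theorem \ref{t3-8} applies verbatim with $\mathcal{L}(G)$ in place of $G$, and then to pull the resulting constraints back to $G$ through Fact \ref{f2-6}; this is exactly the ``combine Fact \ref{f2-6} and Theorem \ref{t3-8}'' strategy announced before the statement. First I would verify applicability: since $G$ is connected and $G\ncong K_{2}$, it has at least two edges, a connected graph with at least two edges has a vertex of degree at least $2$, so two of its edges meet; hence $\mathcal{L}(G)$ has an edge and, being the line graph of a connected graph, is connected. Thus $\mathcal{L}(G)$ is a connected nontrivial graph of order $m_{G}$, and Theorem \ref{t3-8} may be invoked for it.

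With this in hand, each part follows by substituting $\mathcal{L}(G)$ into the corresponding part of Theorem \ref{t3-8} and decoding the extremal line graph via Fact \ref{f2-6}. For $(i)$, Theorem \ref{t3-8}$(i)$ gives $SDD(\mathcal{L}(G))\geq 2$ with equality exactly when $\mathcal{L}(G)\cong K_{2}\cong P_{2}$, and Fact \ref{f2-6}$(vi)$ converts this to $G\cong P_{3}$. Part $(ii)$ is immediate from Theorem \ref{t3-8}$(ii)$, since no value in $(2,4]$ is attained. For $(iii)$, Theorem \ref{t3-8}$(iii)$ forces $\mathcal{L}(G)\in\{S_{3},C_{3}\}$; Fact \ref{f2-6}$(ii)$ and $(iv)$ then yield $G\cong P_{4}$ when $\mathcal{L}(G)\cong S_{3}$, and $G\in\{C_{3},S_{4}\}$ when $\mathcal{L}(G)\cong C_{3}$, after which a direct check gives $SDD(\mathcal{L}(P_{4}))=SDD(S_{3})=5$ and $SDD(\mathcal{L}(C_{3}))=SDD(\mathcal{L}(S_{4}))=SDD(C_{3})=6$. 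For $(iv)$, Theorem \ref{t3-8}$(iv)$ forces $\mathcal{L}(G)\in\{P_{4},C_{4}\}$; Fact \ref{f2-6}$(vi)$ gives $G\cong P_{5}$ from $\mathcal{L}(G)\cong P_{4}$, and $SDD(\mathcal{L}(P_{5}))=SDD(P_{4})=7$, $SDD(\mathcal{L}(C_{4}))=SDD(C_{4})=8$.

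The step I expect to be the main obstacle is the preimage analysis for $\mathcal{L}(G)\cong C_{4}$ in part $(iv)$. One must identify all connected graphs whose line graph is $C_{4}$: since $\mathcal{L}(C_{n})\cong C_{n}$ for every $n\geq 3$, the graph $C_{4}$ is a genuine preimage and must be kept, so the correct conclusion is $G\cong C_{4}$, and this is the unique connected preimage by Whitney's line-graph theorem (the sole coincidence $\mathcal{L}(K_{3})\cong\mathcal{L}(K_{1,3})\cong C_{3}$ occurring only at $n=3$, which is precisely why the triangle case in $(iii)$ splits into two preimages). Care is therefore needed here not to discard the $C_{4}$ case; apart from this point the argument is entirely mechanical, each step being a direct citation of Theorem \ref{t3-8} together with the dictionary supplied by Fact \ref{f2-6} and a short arithmetic check of the four extremal $SDD$ values.
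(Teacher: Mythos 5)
Your proof is correct and follows exactly the route the paper intends: the paper omits its own proof, saying only that the result is obtained by combining Fact \ref{f2-6} with Theorem \ref{t3-8} applied to $\mathcal{L}(G)$, which is precisely what you do. Your extra care over the preimage of $C_{4}$ in part $(iv)$ is well placed, since Fact \ref{f2-6}$(v)$ as literally stated would wrongly discard $G\cong C_{4}$ (because $\mathcal{L}(C_{n})\cong C_{n}$ for all $n\geq 3$), and your appeal to Whitney's theorem correctly recovers the conclusion $G\in\{P_{5},C_{4}\}$ that the theorem asserts.
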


The inverse problem for the SDD index of line graphs $\mathcal{L}(G)$ is also interesting, thus we propose the following problem.
\begin{problem}\label{p4-1}
Solve the inverse problem for the SDD index of line graphs $\mathcal{L}(G)$.
\end{problem}

\begin{theorem}\label{t4-8}
Let $G\in \mathcal{G}_{n}$ with maximum degree $\Delta$ and minimum degree $\delta$, and $G\ncong K_{2}$. Then
$$\frac{SDD(\mathcal{L}(G))}{SDD(G)}\leq \frac{\Delta^{2}-\delta}{4\delta} \left( \frac{4(\Delta-1)^{2}+(\max\{ 2\delta-2,1\})^{2}}{(\Delta-1)\cdot \max\{ 2\delta-2,1\}} \right),$$
with equality if and only if $G$ is regular.
\end{theorem}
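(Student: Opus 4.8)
The plan is to control the quotient by bounding the numerator $SDD(\mathcal{L}(G))$ from above and the denominator $SDD(G)$ from below, arranging matters so that the factor $\frac{4(\Delta-1)^{2}+(\max\{2\delta-2,1\})^{2}}{(\Delta-1)\cdot\max\{2\delta-2,1\}}$ surfaces directly. First I would rewrite the bracket in the upper bound of Theorem \ref{t4-3} over a common denominator,
$$\frac{2\Delta-2}{\max\{2\delta-2,1\}}+\frac{\max\{2\delta-2,1\}}{2\Delta-2}=\frac{4(\Delta-1)^{2}+(\max\{2\delta-2,1\})^{2}}{2(\Delta-1)\cdot\max\{2\delta-2,1\}},$$
so that Theorem \ref{t4-3} gives
$$SDD(\mathcal{L}(G))\le \frac{M_{1}(G)-2m}{4}\cdot \frac{4(\Delta-1)^{2}+(\max\{2\delta-2,1\})^{2}}{(\Delta-1)\cdot\max\{2\delta-2,1\}}.$$
Dividing through by $SDD(G)$, the stated bound follows once I show the single clean inequality $\delta\,(M_{1}(G)-2m)\le (\Delta^{2}-\delta)\,SDD(G)$, because this is exactly what collapses the coefficient $\frac{M_{1}(G)-2m}{4\,SDD(G)}$ down to $\frac{\Delta^{2}-\delta}{4\delta}$ (the shared factor is positive and cancels).

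The heart of the matter is therefore this auxiliary inequality, which I would prove edgewise. Using $M_{1}(G)-2m=\sum_{uv\in E_{G}}(d_{u}+d_{v}-2)$ and $SDD(G)=\sum_{uv\in E_{G}}\frac{d_{u}^{2}+d_{v}^{2}}{d_{u}d_{v}}$, it suffices to establish, for each edge $uv$, that $(\Delta^{2}-\delta)\frac{d_{u}^{2}+d_{v}^{2}}{d_{u}d_{v}}\ge \delta(d_{u}+d_{v}-2)$. On the left I apply $\frac{d_{u}^{2}+d_{v}^{2}}{d_{u}d_{v}}\ge 2$ from Lemma \ref{l2-2}, and on the right $d_{u}+d_{v}-2\le 2\Delta-2$; the edge inequality then reduces to the scalar statement $2(\Delta^{2}-\delta)\ge 2\delta(\Delta-1)$, i.e.\ $\Delta^{2}\ge \delta\Delta$, which holds since $\Delta\ge\delta$. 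Summing over all edges yields $\delta\,(M_{1}(G)-2m)\le (\Delta^{2}-\delta)\,SDD(G)$, and combining it with the displayed numerator bound gives the theorem.

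For the equality discussion, note that the scalar step $2(\Delta^{2}-\delta)\ge 2\delta(\Delta-1)$ is tight iff $\Delta=\delta$, which already forces $G$ to be regular; hence the auxiliary inequality is an equality exactly for regular $G$, and is strict for any connected non-regular $G$ (where some adjacent pair of vertices has unequal degrees). Since the upper bound of Theorem \ref{t4-3} is attained iff $G\cong P_{4}$ or $G$ is regular, while $P_{4}$ renders the auxiliary inequality strict, both ingredients are simultaneously tight iff $G$ is regular, as claimed. I expect the only real obstacle to be isolating the correct auxiliary inequality $\delta\,(M_{1}(G)-2m)\le (\Delta^{2}-\delta)\,SDD(G)$ and confirming its edgewise slack vanishes precisely for regular graphs; the rest is routine, though one must check all denominators are positive, which is guaranteed because $G\ncong K_{2}$ forces $\Delta\ge 2$.
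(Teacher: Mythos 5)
Your argument is correct, and its skeleton matches the paper's: both start from the chain $SDD(\mathcal{L}(G))\le \frac{1}{2}(M_{1}(G)-2m)\bigl(\frac{2\Delta-2}{\max\{2\delta-2,1\}}+\frac{\max\{2\delta-2,1\}}{2\Delta-2}\bigr)$ of Theorem \ref{t4-3} and then must convert the coefficient $M_{1}(G)-2m$ into $\frac{\Delta^{2}-\delta}{\delta}\,SDD(G)$. Where you diverge is in how that conversion is justified. The paper composes two separate cited facts, $M_{1}(G)\le \frac{2m\Delta^{2}}{\delta}$ (from the reference \cite{ligz2022}) and $2m\le SDD(G)$ (Theorem \ref{t3-1}), each tight exactly for regular graphs; their composition is precisely your auxiliary inequality $\delta\,(M_{1}(G)-2m)\le(\Delta^{2}-\delta)\,SDD(G)$. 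You instead prove that inequality directly, edge by edge, from $\frac{d_{u}^{2}+d_{v}^{2}}{d_{u}d_{v}}\ge 2$ and $d_{u}+d_{v}-2\le 2\Delta-2$, reducing to the scalar fact $\Delta^{2}\ge\delta\Delta$. This makes your proof self-contained (no appeal to the external bound on $M_{1}$) and gives a cleaner equality analysis: the scalar step is tight iff $\Delta=\delta$, which combines with the equality case of Theorem \ref{t4-3} ($G\cong P_{4}$ or $G$ regular, and $P_{4}$ leaves your auxiliary inequality strict) to yield regularity, exactly as claimed. Your checks on positivity of the denominators ($\Delta\ge2$ since $G$ is connected and $G\ncong K_{2}$, hence $\Delta^{2}-\delta>0$ and $M_{1}(G)-2m>0$) are also in order.
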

\begin{proof}
We know that $G\ncong K_{2}$, $\Delta_{\mathcal{L}(G)}\leq 2\Delta-2$, $\delta_{\mathcal{L}(G)}\geq \max\{ 2\delta-2,1\}$.

Since $M_{1}(G)\leq \frac{2m\Delta^{2}}{\delta}$, with equality if and only if $G$ is regular \cite{ligz2022}.
By Theorem \ref{t3-4} and Lemma \ref{l2-2}, we have
\begin{eqnarray*}
SDD(\mathcal{L}(G)) & \leq & m_{\mathcal{L}(G)} \left( \frac{\Delta_{\mathcal{L}(G)}}{\delta_{\mathcal{L}(G)}}+\frac{\delta_{\mathcal{L}(G)}}
{\Delta_{\mathcal{L}(G)}}  \right) \\
& = &  \frac{1}{2}(M_{1}(G)-2m) \left( \frac{\Delta_{\mathcal{L}(G)}}{\delta_{\mathcal{L}(G)}}+\frac{\delta_{\mathcal{L}(G)}}
{\Delta_{\mathcal{L}(G)}}  \right)   \\
& \leq & \frac{1}{2}(M_{1}(G)-2m)\left( \frac{2\Delta-2}{\max\{ 2\delta-2,1\}}+ \frac{\max\{ 2\delta-2,1\}}{2\Delta-2}  \right)   \\
& \leq &  (\frac{m\Delta^{2}}{\delta}-m)\left( \frac{2\Delta-2}{\max\{ 2\delta-2,1\}}+ \frac{\max\{ 2\delta-2,1\}}{2\Delta-2}  \right)         \\
& = &  2m(\frac{\Delta^{2}-\delta}{2\delta})\left( \frac{2\Delta-2}{\max\{ 2\delta-2,1\}}+ \frac{\max\{ 2\delta-2,1\}}{2\Delta-2}  \right)         \\
& \leq &  SDD(G)\cdot (\frac{\Delta^{2}-\delta}{2\delta})\left( \frac{2\Delta-2}{\max\{ 2\delta-2,1\}}+ \frac{\max\{ 2\delta-2,1\}}{2\Delta-2}  \right) ,
\end{eqnarray*}
with equality if and only if $G$ is regular.
\end{proof}

\section{Conclusions}
In a recent paper \cite{fudg2018}, Furtula et al. determined the quality of SDD index exceeds that of some more popular VDB indices, in particular that of the GA index. They shown a close connection between the SDD index and the earlier well-established GA index. Thus it is meaningful and important to consider the chemical and mathematical properties of the SDD index.

Liu et al. \cite{lpli2020} determined the minimum and second minimum SDD index of tricyclic graphs. By the way, using a similar way of \cite{deeb2018,lhua2022}, we can also determine minimum and second minimum SDD index of tetracyclic (chemical) graphs.

\baselineskip=0.20in


\begin{thebibliography}{99}

\bibitem{alem2020} A. Ali, S. Elumalai, T. Mansour, On the symmetric division deg index of molecular graphs,
       {\it MATCH Commun. Math. Comput. Chem.\/} {\bf 83} (2020) 205--220.

\bibitem{bond2008} J. A. Bondy, U. S. R. Murty, {\it Graph Theory\/}, Springer, New York, 2008.

\bibitem{brsg2014} G. Britto Antony Xavier, E. Suresh, I. Gutman, Counting relations for general Zagreb indices,
        {\it Kragujevac J. Math. \/} {\bf 38} (2014) 95--103.

\bibitem{cgpp2020} W. Carballosa, A. Granados, D. Pestana, A. Portilla, J. M. Sigarreta. Relations between some topological indices and the line graph,
        {\it J. Math. Chem.\/} {\bf 58} (2020) 632--646.

\bibitem{cpst2020} W. Carballosa, D. Pestana, J. M. Sigarreta, E. Tour\'{\i}s, Relations between the general sum connectivity index and the line graph,
        {\it J. Math. Chem.\/} {\bf 58} (2020) 2273--2290.

\bibitem{dama2019} K. C. Das, M. Mateji\'{c}, E. Milovanovi\'{c}, I. Milovanovi\'{c}, Bounds for symmetric division deg index of graphs,
       {\it Filomat\/} {\bf 33} (2019) 683--698.

\bibitem{dehy2007} H. Deng, A unified approach to the extremal Zagreb indices for trees, unicyclic graphs and bicyclic graphs,
        {\it MATCH Commun. Math. Comput. Chem.\/} {\bf 57} (2007) 597--616.

\bibitem{deeb2018} H. Deng, S. Elumalai, S. Balachandran, Maximum and second maximum of geometric-arithmetic index of tricyclic graphs,
       {\it MATCH Commun. Math. Comput. Chem\/} {\bf 79} (2018) 467--475.

\bibitem{dusu2021} J. Du, X. Sun, On symmetric division deg index of trees with given parameters,
       {\it AIMS Mathematics\/} {\bf 6} (2021) 6528--6541.

\bibitem{fugu2015} B. Furtula, I. Gutman, A forgotten topological index,
        {\it J. Math. Chem.\/} {\bf 53} (2015) 1184--1190.

\bibitem{fudg2018} B. Furtula, K. C. Das, I. Gutman, Comparative analysis of symmetric division deg index as potentially useful molecular descriptor,
        {\it Int. J. Quantum Chem.\/} {\bf 118} (2018) \#e25659.

\bibitem{guda2004} I. Gutman, K. C. Das, The first Zagreb index 30 years after,
        {\it MATCH Commun. Math. Comput. Chem.\/} {\bf 50} (2004) 83--92.

\bibitem{gfvp2015} I. Gutman, B. Furtula, Z. K. Vukicevic, G. Popivoda, On Zagreb indices and coindices,
        {\it MATCH Commun. Math. Comput. Chem.\/} {\bf 74} (2015) 5--16.

\bibitem{gutr1972} I. Gutman, N. Trinajsti\'{c}, Graph theory and melecular orbitals, total $\pi$ electron energy of alternant hydrocarbons,
        {\it Chem. Phys. Lett.\/} {\bf 17} (1972) 535--538.

\bibitem{gulo2016} C. K. Gupta, V. Lokesha, B. Shwetha Shetty, P. S. Ranjini, Graph operations on
       the symmetric division deg index of graphs,
       {\it Palestine J. Math\/} {\bf 5} (2016) 1--18.

\bibitem{glsr2016} C. K. Gupta, V. Lokesha, B. Shwetha Shetty, P. S. Ranjini, On the symmetric
        division deg index of graph,
       {\it Southeast Asian Bull. Math.\/} {\bf 41} (2016) 1--23.

\bibitem{gzam2021} M. Ghorbani, S. Zangi, N. Amraei, New results on symmetric division deg index,
       {\it J. Appl. Math. Comput. \/} {\bf 65} (2021) 161--176.

\bibitem{helz2019} X. He, S. Li, Q. Zhao, Sharp bounds on the reduced second Zagreb index of graphs with given number of cut vertices,
        {\it Discr. Appl. Math.\/} {\bf 271} (2019) 49--63.

\bibitem{fajt1987} S. Fajtlowicz, On conjectures of graffiti II,
        {\it Congr. Numer.\/} {\bf 60} (1987) 189--197.

\bibitem{ross2018} J. M. Rodri\'{c}guez, J. L. S\'{a}nchez, J. M. Sigarreta, On the first general Zagreb index,
        {\it J. Math. Chem.\/} {\bf 56} (2018) 1849--1864.

\bibitem{lpli2020} C. Liu, Y. Pan, J. Li, Tricyclic graphs with the minimum symmetric division deg index,
       {\it Discrete Math. Lett.\/} {\bf 3} (2020) 14--18.

\bibitem{lhua2022} H. Liu, L. You, Y. Huang, Extremal Sombor indices of tetracyclic (chemical) graphs,
        {\it MATCH Commun. Math. Comput. Chem.\/} {\bf 88} (2022) 573--581.

\bibitem{lizh2005} X. Li, J. Zheng, A unified approach to the extremal trees for different indices,
        {\it MATCH Commun. Math. Comput. Chem.\/} {\bf 54} (2005) 195--208.

\bibitem{ligz2022} G. Li, M. Zhang, Sharp bounds on the Arithmetic-geometric index of graphs and line graphs,
        {\it Discr. Appl. Math.\/} {\bf 318} (2022) 47--60.

\bibitem{pjic2019} J. L. Palacios, New upper bounds for the symmetric division deg index of graphs,
       {\it Discrete Math. Lett.\/} {\bf 2} (2019) 52--56.

\bibitem{pali2019} Y. Pan, J. Li, Graphs that minimizing symmetric division deg index,
       {\it MATCH Commun. Math. Comput. Chem.\/} {\bf 82} (2019) 43--55.

\bibitem{pest2019} D. Pestana, J. M. Sigarreta, E. Tour\'{\i}s, CMMSE18: geometic-arithmetic index and line graph,
        {\it J. Math. Chem.\/} {\bf 57} (2019) 1427--1447.

\bibitem{rase2020} A. Rajpoot, L. Selvaganesh, Bounds of the symmetric division deg index for trees and unicyclic graphs with a perfect matching,
       {\it Iranian J. Math. Chem.\/} {\bf 11} (2020) 141--159.

\bibitem{sgdu2021} X. Sun, Y. Gao, J. Du, On symmetric division deg index of unicyclic graphs and bicyclic graphs with given matching number,
       {\it  AIMS Mathematics\/} {\bf 6} (2021) 9020--9035.

\bibitem{vufr2009} D. Vuki\v{c}evi\'{c}, B. Furtula, Topological index based on the ratios of geometrical and arithmetical means of end-vertex degrees of edges,
        {\it J. Math. Chem.\/} {\bf 46} (2009) 1369--1376.

\bibitem{vasi2014} A. Vasilyev, Upper and lower bounds of symmetric division deg index,
       {\it  Iranian J. Math. Chem.\/} {\bf 5} (2014) 91--98.

\bibitem{vuga2010} D. Vuki\v{c}evi\'{c}, M. Ga\v{s}perov, Bond additive modeling 1. Adriatic indices,
        {\it Croat. Chem. Acta\/} {\bf 83} (2010) 243--260.

\bibitem{zhtr2010} B. Zhou, N. Trinajsti\'{c}, On general sum-connectivity index,
        {\it J. Math. Chem.\/} {\bf 47} (2010) 210--218.


\end{thebibliography}
\end{document}